%
%
%

\documentclass[11pt]{amsart}

\usepackage[a4paper,hmargin=3.5cm,vmargin=3.5cm]{geometry}
\usepackage{amsfonts,amssymb,amscd,amstext}
\usepackage{graphicx}
\usepackage[dvips]{epsfig}
\usepackage{quoting}
\usepackage{hyperref}


\usepackage{fancyhdr}
\pagestyle{fancy}
\fancyhf{}

\input xy
\xyoption{all}



\usepackage{enumerate}
\usepackage{titlesec}
\usepackage{mathrsfs}

\usepackage{tikz-cd} 

\pretolerance=2000
\tolerance=3000



\linespread{1.03}

\setlength{\parskip}{0.5em}

\titleformat{\section}
{\filcenter\bfseries} {\thesection{.}}{0.2cm}{}
\titleformat{\subsection}[runin]
{\bfseries} {\thesubsection{.}}{0.15cm}{}[.]
\titleformat{\subsubsection}[runin]
{\em}{\thesubsubsection{.}}{0.15cm}{}[.]

\usepackage[up,bf]{caption}


\newtheorem{theorem}{Theorem}[section]
\newtheorem{proposition}[theorem]{Proposition}
\newtheorem{lemma}[theorem]{Lemma}
\newtheorem{claim}[theorem]{Claim}
\newtheorem{corollary}[theorem]{Corollary}

\theoremstyle{definition}
\newtheorem{definition}[theorem]{Definition}
\newtheorem{remark}[theorem]{Remark}

\newtheorem{problem}[theorem]{Problem}

\numberwithin{equation}{section}
\numberwithin{figure}{section}




\newcommand\Ascr{\mathscr{A}}

\newcommand\Cscr{\mathscr{C}}

\newcommand\Iscr{\mathscr{I}}

\newcommand\Oscr{\mathscr{O}}

\newcommand\Gscr{\mathscr{G}}

\newcommand\Pscr{\mathscr{P}}


\newcommand\B{\mathbb{B}}
\newcommand\C{\mathbb{C}}

\newcommand\CP{\mathbb{CP}}

\newcommand\N{\mathbb{N}}

\newcommand\R{\mathbb{R}}

\newcommand\Z{\mathbb{Z}}



%
%

%
%


%
%

\newcommand\longhookrightarrow{\ensuremath{\lhook\joinrel\relbar\joinrel\rightarrow}}

%
%

%
%

\renewcommand\span{\mathrm{span}}

\newcommand\Flux{\mathrm{Flux}}
\newcommand\GCMI{\mathrm{GCMI}}
\newcommand\CMI{\mathrm{CMI}}
\newcommand\NC{\mathrm{NC}}

\newcommand\Hess{\mathrm{Hess}}

\def\span{\mathrm{span}}

\def\Flux{\mathrm{Flux}}


\usepackage{color}
\usepackage[color=blue!20]{todonotes}

\begin{document}


\fancyhead[LO]{Every nonflat conformal minimal surface is homotopic to a proper one}
\fancyhead[RE]{Tja\v{s}a Vrhovnik}
\fancyhead[RO,LE]{\thepage}

\thispagestyle{empty}


\begin{center}

{\bf\Large Every nonflat conformal minimal surface \\  is homotopic to a proper one}

\medskip

%
%
{\bf Tja\v{s}a Vrhovnik}
\end{center}


%
%
\medskip

\begin{quoting}[leftmargin={5mm}]
{\small
\noindent {\bf Abstract}\hspace*{0.1cm}
Given an open Riemann surface $M$, we prove that every nonflat conformal minimal immersion $M\to\R^n$ ($n\geq 3$) is homotopic through nonflat conformal minimal immersions $M\to\R^n$ to a proper one. If $n\geq5$, it may be chosen in addition injective, hence a proper conformal minimal embedding. Prescribing its flux, as a consequence, every nonflat conformal minimal immersion $M\to\R^n$ is homotopic to the real part of a proper holomorphic null embedding $M\to\C^n$. We also obtain a result for a more general family of holomorphic immersions from an open Riemann surface into $\C^n$ directed by Oka cones in $\C^n$.

\noindent{\bf Keywords}\hspace*{0.1cm} 
Riemann surface, minimal surface, proper minimal surface, null curve, directed immersion, Oka manifold

\noindent{\bf Mathematics Subject Classification (2020)}\hspace*{0.1cm} 
53A10, 
32H02, 
32Q56, 
53C42. 
}
\end{quoting}



\section{Introduction}
\label{sec:intro}

Minimal surfaces have been a topic of interest for centuries, however, it has not been until the pioneering work by Osserman~\cite{osserman1986survey} when the systematic research began.
More recently, especially during the last decade, minimal surfaces in Euclidean spaces have been studied extensively, using complex-analytic methods and Oka theory. We refer to the monograph~\cite{alarcon2021minimal} for a detailed description of the development on the topic.
Let $M$ be an open Riemann surface, and let $n\geq 3$ be an integer. A smooth immersion $u=(u_1,\dots,u_n)\colon M\to\R^n$ is \emph{conformal} if and only if the $(1,0)$-differential $\partial{u}=(\partial{u}_1,\dots,\partial{u}_n)$ satisfies the nullity condition
\begin{equation} \label{eq:null-condition}
	(\partial{u}_1)^2 + (\partial{u}_2)^2 + \cdots + (\partial{u}_n)^2 = 0.
\end{equation}
Such an immersion is \emph{minimal} if and only if it is harmonic. 
%
Moreover, a conformal minimal immersion $u\colon M\to\R^n$ is \emph{nonflat} if its image $u(M)\subset\R^n$ is not contained in any affine $2$-plane in $\R^n$.
Nonflatness is a natural condition that enables one to embed the derivative map $\partial{u}/\theta\colon M\to{\bf A}_{*}=\{(z_1,\ldots,z_n)\in\C^n: \sum_{i=1}^n z_{i}^2=0\}\setminus\{0\}$ (where $\theta$ is a fixed nowhere vanishing holomorphic $1$-form on $M$) as the core of a period dominating spray of holomorphic maps from an open Riemann surface $M$ into the Oka manifold ${\bf A}_{*}$ (see, e.g.~\cite[Lemma~3.2.1]{alarcon2021minimal}). On the contrary, flat conformal minimal immersions turn out to be the critical points of the period map~\eqref{eq:period map} and may arise as the singular points of the space of conformal minimal immersions. For that reason, our results, as well as the majority of the results we refer to, examine spaces of nonflat maps.
We equip the space of conformal minimal immersions $M\to\R^n$, denoted by $\CMI(M,\R^n)$, with the compact-open topology, and consider its subspace
\begin{equation*}
	\CMI_{*}(M,\R^n)\subset \CMI(M,\R^n)
\end{equation*}
consisting of all nonflat conformal minimal immersions $M\to\R^n$.

The following notion is closely related to the above. A holomorphic immersion $z=(z_1,\dots,z_n)\colon M\to\C^n$ ($n\geq 3$) from an open Riemann surface $M$ into $\C^n$ is said to be a \emph{holomorphic null curve} (or a \emph{null holomorphic immersion}) if it satisfies the nullity condition $(d{z}_1)^2 + (d{z}_2)^2 + \cdots + (d{z}_n)^2 = 0$~(cf.~\eqref{eq:null-condition}). 
Such curve is \emph{nonflat} if its image $z(M)\subset\C^n$ is not contained in any affine complex line.
We write $\NC(M,\C^n)$ for the space of null curves $M\to\C^n$ endowed with the compact-open topology, and denote by
\begin{equation*}
	\NC_{*}(M,\C^n)\subset \NC(M,\C^n)
\end{equation*}
the subspace consisting of all nonflat holomorphic null curves $M\to\C^n$.
Since the real and the imaginary parts of a null curve $M\to\C^n$ define conformal minimal immersions $M\to\R^n$, we have the following natural inclusions
\begin{equation*}
	\Re\NC(M,\C^n)\longhookrightarrow\CMI(M,\R^n) \quad \text{and} \quad \Re\NC_{*}(M,\C^n)\longhookrightarrow\CMI_{*}(M,\R^n),
\end{equation*}
where $\Re\NC(M,\C^n)=\{\Re z\colon z\in\NC(M,\C^n)\}$ (and analogously for the space of real parts of nonflat holomorphic null curves).

In the last decade or so, one of the goals in the minimal surface theory was to understand the homotopy-theoretic properties of the space $\CMI_{*}(M,\R^n)$. That was studied by Forstneri\v{c} and L{\'a}russon in~\cite{forstneric2019parametric}, as they showed the parametric h-principle with approximation for the inclusion 
\begin{equation} \label{eq:parametric h-inclusion}
	\iota\colon\Re\NC_{*}(M,\C^n) \longhookrightarrow \CMI_{*}(M,\R^n)
\end{equation} 
of the space of real parts of nonflat null holomorphic immersions $M\to\C^n$ into the space of nonflat conformal minimal immersions $M\to\R^n$. More precisely, the inclusion~\eqref{eq:parametric h-inclusion} is a weak homotopy equivalence~\cite[Theorem~1.1]{forstneric2019parametric}, meaning that it induces a bijection of path components of the corresponding spaces and an isomorphism of homotopy groups
\begin{equation*}
	\pi_k \left(\Re \NC_{*}(M,\C^n)\right) \overset{\cong}\longrightarrow \pi_k \left(\CMI_{*}(M,\R^n)\right)
\end{equation*}
for every $k\geq1$ and every choice of base point.
Furthermore, by~\cite[Theorem~1.2]{forstneric2019parametric}, the maps in the commuting diagram
\begin{equation*}
	\begin{tikzcd}
		\NC_{*}(M,\C^n) \arrow{r}{\phi} \arrow[swap]{d}{\Re} & \Oscr_{*}(M,{\bf A}_{*}) \arrow[hookrightarrow]{r} & \Oscr(M,{\bf A}_{*}) \arrow[hookrightarrow]{r} & \Cscr(M,{\bf A}_{*}) \\%
\Re \NC_{*}(M,\C^n) \arrow[hookrightarrow]{r}{\iota} & \CMI_{*}(M,\R^n) \arrow{u}{\psi} 
	\end{tikzcd}
\end{equation*}
are weak homotopy equivalences. Here, $\phi(z)=\partial{z}/\theta$ and $\psi(u)=2\partial{u}/\theta$ (with $\theta$ being a fixed nowhere vanishing holomorphic $1$-form on $M$), and the spaces $\Oscr_{*}(M,{\bf A}_*)$, $\Oscr(M,{\bf A}_*)$, $\Cscr(M,{\bf A}_*)$, of (nonflat) holomorphic, and continuous, respectively, maps $M\to{\bf A}_*$ are endowed with the compact-open topology. These maps are homotopy equivalences when an open Riemann surface $M$ is of finite topological type. This reduces the understanding of the homotopy type of the space $\CMI_{*}(M,\R^n)$ to a purely topological problem: it is the same as the space $\Cscr(M,{\bf A}_*)$.
As a special case of \cite[Theorem~4.1]{forstneric2019parametric}, the parametric h-principle with approximation for the inclusion~\eqref{eq:parametric h-inclusion}, they obtained a result by Alarc\'on and Forstneri\v{c}~\cite[Theorem~1.1]{alarcon2018every}, which says that for every nonflat conformal minimal immersion $u\colon M\to\R^n$ of an open Riemann surface, there exists a smooth isotopy of nonflat conformal minimal immersions $u_t\colon M\to \R^n$, $t\in [0,1]$, such that $u_0=u$ and $u_1$ is the real part of a holomorphic null curve $M\to\C^n$.
Here, by \emph{isotopy}, we mean a regular homotopy of smooth $1$-parameter family of immersions.
Nonetheless, the question whether the inclusion $\Re\NC(M,\C^n) \hookrightarrow \CMI(M,\R^n)$ is also a weak homotopy equivalence remains open.

Going further, a natural question is to study global properties of minimal surfaces, such as completeness and properness.
Recall that an immersion $u\colon M\to\R^n$ is \emph{complete} if the pullback $u^{*}(ds^2)$ of the Euclidean metric $ds^2$ on $\R^n$ is a complete metric on $M$.
It was proven in~\cite[Theorem~7.1]{alarcon2016embedded} that the space $\CMI_{*}^{c}(M,\R^n)$ of complete nonflat conformal minimal immersions $M\to\R^n$ (endowed with the compact-open topology) is a dense subspace of the space $\CMI(M,\R^n)$ of all conformal minimal immersions. Even more, it is a \emph{residual} (or \emph{comeagre}) set in Baire category sense~\cite[Theorem~1.2]{alarcon2024generic}, hence, \emph{almost all} conformal minimal immersions are complete. (We refer e.g. to~\cite{willard1970general} for the background on Baire spaces.)
Concerning the homotopy-theoretic properties of the space $\CMI_{*}^{c}(M,\R^n)$, the first known result, due to Alarc\'on and Forstneri\v{c}~\cite[Theorem~1.2]{alarcon2018every}, affirms that every nonflat conformal minimal immersion $M\to\R^n$ can be deformed through maps of the same type to a complete nonflat conformal minimal immersion. In particular, the inclusion $\CMI_{*}^{c}(M,\R^n) \hookrightarrow \CMI_{*}(M,\R^n)$ induces a surjection of path components.
Just recently, Alarc\'on and L\'arusson in~\cite{alarcon2025strong} obtained a much stronger result, namely, the parametric h-principle for complete nonflat conformal minimal immersions of an open Riemann surface $M$ into $\R^n$. As a consequence, it follows that the inclusion
\begin{equation} \label{eq:parametric h-inclusion complete}
	\iota\colon \CMI_{*}^{c}(M,\R^n) \longhookrightarrow \CMI_{*}(M,\R^n)
\end{equation}
is a weak homotopy equivalence, and moreover, if $M$ is of finite topological type, then the inclusion~\eqref{eq:parametric h-inclusion complete} is a homotopy equivalence. The latter means that there exists a homotopy inverse $\eta\colon \CMI_{*}(M,\R^n) \to \CMI_{*}^{c}(M,\R^n)$ to $\iota$~\eqref{eq:parametric h-inclusion complete} (see~\cite[Theorem~1.1, Corollary~1.2]{alarcon2025strong}).
By the parametric h-principle discussed in the previous paragraph, the spaces $\CMI_{*}^{c}(M,\R^n)$ and $\Cscr(M,{\bf A}_*)$ have the same homotopy type~\cite[Corollary~1.8]{alarcon2025strong}.
In~\cite{alarcon2025strong}, they also obtained analogous results for holomorphic null curves $M\to\C^n$.
Another result in this direction concerns the question which holomorphic maps $M\to{\bf Q}^{n-2}$ are Gauss maps of complete conformal minimal immersions; we refer to~\cite{fujimoto1988onthenumber,osserman1963oncomplete,weitsman1987some} for some important results about this question.
Here, 
\begin{equation*}
	{\bf Q}^{n-2} = \left\{ [z_1:\cdots:z_n]\in \CP^{n-1}\colon z_{1}^2+\cdots+z_{n}^2=0 \right\}
\end{equation*}
denotes the hyperquadric in $\CP^{n-1}$, and the \emph{Gauss map} of a conformal minimal immersion $u\in\CMI(M,\R^n)$ is the holomorphic map $\Gscr(u)\colon M\to{\bf Q}^{n-2}$ given by
\begin{equation*}
	\Gscr(u)(p) = \left[ \partial{u}_1(p):\cdots:\partial{u}_n(p) \right], \quad p\in M;
\end{equation*}
see~\cite{hoffman1980thegeometry}.
Writing $\Oscr_{*}^{c}(M,{\bf Q}^{n-2}) = \Gscr(\CMI_{*}^{c}(M,\R^n))$ for the space of the Gauss maps of complete nonflat conformal minimal immersions, Alarc\'on and L\'arusson proved~\cite[Theorem~1.2]{alarcon2024space} that the inclusion
\begin{equation*}
	\Oscr_{*}^{c}(M,{\bf Q}^{n-2}) \longhookrightarrow \Cscr(M,{\bf Q}^{n-2})
\end{equation*}
is a weak homotopy equivalence, and a homotopy equivalence whenever $M$ has finite topological type. 

As far as we know, no homotopy-theoretic properties of the space of proper nonflat conformal minimal immersions have been studied yet, therefore being the aim of the present paper. 
Recall that a map $x\colon M\to \R^n$ is \emph{proper} if for every compact set $K\subset \R^n$ the preimage $x^{-1}(K)$ is compact in $M$. 
We denote by 
\begin{equation*}
	\CMI_{*}^{p}(M,\R^n) \subset \CMI(M,\R^n)
\end{equation*} 
the subspace of proper nonflat conformal minimal immersions $M\to\R^n$ endowed with the compact-open topology.
Since properness implies completeness, studying spaces of proper minimal surfaces is a natural next step.
However, as opposite to complete nonflat conformal minimal immersions, the subspace $\CMI_{*}^{p}(M,\R^n)$ is a \emph{meagre} set in the space $\CMI(M,\R^n)$ in Baire category sense~\cite[Corollary~1.5]{alarcon2024generic}. This means that, despite being dense~\cite{alarcon2016embedded,alarcon2012minimal}, this subspace is very small, even negligible, so it is much more evolved to find proper maps among all conformal minimal immersions $M\to\R^n$.
Here is our main result.

\begin{theorem} \label{thm:main}
Let $M$ be an open Riemann surface, $u\colon M \to \R^n$ ($n\geq3$) a nonflat conformal minimal immersion and $\mathfrak{p}\colon H_1(M,\Z)\to \R^n$ a group homomorphism. Then there is a continuous family $u_t\colon M\to \R^n$ of nonflat conformal minimal immersions, $t\in [0,1]$, such that $u_0=u$, $u_1$ is proper and $\Flux(u_1)=\mathfrak{p}$.

Moreover, if $n\geq5$, then $u_t$ can be chosen such that $u_1$ is in addition injective, that is, a proper nonflat conformal minimal embedding $M\to\R^n$.
\end{theorem}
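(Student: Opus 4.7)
The plan is to combine a Mergelyan-type approximation theorem with period domination for nonflat conformal minimal immersions, together with a Jorge--Xavier-style pushing-out procedure in the target, performing the whole construction parametrically so as to yield a continuous homotopy rather than merely a single proper map. Choose an exhaustion $M_1\Subset M_2\Subset\cdots$ of $M$ by smoothly bounded Runge domains with $\bigcup_j M_j=M$, and a sequence of radii $R_j\nearrow\infty$. Setting $v_0=u$ and $t_0=0$, I would inductively produce nonflat conformal minimal immersions $v_j\colon M\to\R^n$, times $t_{j-1}<t_j<1$ with $t_j\to 1$, and partial homotopies $h_j\colon[t_{j-1},t_j]\to\CMI_{*}(M,\R^n)$ joining $v_{j-1}$ to $v_j$, such that: (i) the restriction of $\Flux(v_j)$ to $H_1(M_j,\Z)\subset H_1(M,\Z)$ agrees with $\mathfrak{p}|_{H_1(M_j,\Z)}$; (ii) $v_k(M\setminus M_{j-1})\subset\R^n\setminus\bar B(0,R_{j-1})$ for every $k\geq j$; (iii) $\|v_{j+1}-v_j\|_{M_j}<2^{-j}$; and (iv) $h_j$ stays in a $2^{-j}$-neighborhood of $v_{j-1}$ on $M_{j-1}$. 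Condition (iii) forces $(v_j)$ to be Cauchy on compacta, (ii) makes the limit $u_1=\lim_j v_j$ proper, (i) delivers the prescribed flux in the limit since every class of $H_1(M,\Z)$ is represented in some $H_1(M_j,\Z)$, and (iii)--(iv) together allow the concatenation of the $h_j$ (with $u_1=\lim_{t\nearrow 1}u_t$) to be a continuous homotopy inside $\CMI_{*}(M,\R^n)$.

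The analytic core is the inductive step from $v_j$ to $v_{j+1}$. Around $v_j$ on a neighborhood of $M_{j+1}$, construct a finite-dimensional period-dominating spray of nonflat conformal minimal immersions, whose existence follows from the Oka property of the null quadric ${\bf A}_{*}$ and nonflatness of $v_j$ (cf.~\cite[Lemma~3.2.1]{alarcon2021minimal}). Apply the Mergelyan theorem for conformal minimal immersions with period control (a standard tool in~\cite{alarcon2012minimal,alarcon2016embedded,alarcon2021minimal}) to approximate $v_j$ on $M_j$ by a nonflat conformal minimal immersion on $M_{j+1}$ with flux $\mathfrak{p}|_{H_1(M_{j+1},\Z)}$, while simultaneously modifying its image on $M_{j+1}\setminus M_j$ so that it lies outside $\bar B(0,R_j)$; this pushing-out step is the one used in the constructions of proper minimal surfaces in the cited references. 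The result is then extended back to all of $M$ by the same technique, with the approximation tolerance chosen smaller than $\dist(v_{j+1}(M\setminus M_j),\bar B(0,R_j))\cdot\sum_{k> j}2^{-k}$, which guarantees (ii). The short homotopy $h_{j+1}$ from $v_j$ to $v_{j+1}$ is realized inside the period-dominating spray: both maps correspond to spray parameters that agree on periods over $H_1(M_j,\Z)$ and are close in the sup norm on $M_j$, so the straight-line segment between their parameters traces a continuous path in $\CMI_{*}(M,\R^n)$ of the size required by (iv).

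The main obstacle is the simultaneous control of three global conditions, namely properness, prescribed flux, and continuity of the path $t\mapsto u_t$ in the compact-open topology: any deformation performed to push the image to infinity at stage $j+1$ can a priori spoil the period conditions and the earlier pushing-out achieved at stage $j$. The period-dominating spray supplies a finite-dimensional correction mechanism that decouples the flux adjustment from the geometric modification, and the geometric decay of tolerances ensures that (i) and (ii) survive the passage to the limit. For the embedding statement in dimension $n\geq 5$, I would add a general-position step inside each inductive stage: using the same spray, perturb $v_{j+1}$ by an arbitrarily small amount so that its restriction to $M_{j+1}$ is injective, which is generic because the self-intersection locus of a generic smooth map of a real $2$-manifold into $\R^n$ has expected dimension $4-n<0$ and hence is empty; such a perturbation can be absorbed into the tolerances (iii)--(iv), and the limit $u_1$ is then a proper nonflat conformal minimal embedding $M\to\R^n$ with $\Flux(u_1)=\mathfrak{p}$.
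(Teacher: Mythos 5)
Your proposal follows the same broad architecture as the paper's proof: exhaust $M$ by compact Runge domains, inductively push the image out of larger balls, use period-dominating sprays built from the Oka property of ${\bf A}_{*}$ to control flux, and pass to the limit with geometrically decaying tolerances. The issues with properness, prescribed flux, injectivity for $n\geq 5$, and convergence are all addressed in essentially the way the paper handles them. However, there is a genuine gap in the one place that actually requires a new idea, namely the construction of the partial homotopies $h_j$.

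You assert that the homotopy from $v_j$ to $v_{j+1}$ is ``realized inside the period-dominating spray: both maps correspond to spray parameters \dots so the straight-line segment between their parameters traces a continuous path.'' This does not hold. The period-dominating spray at $v_j$ is a finite-dimensional family $\zeta\mapsto v_j^\zeta$ of deformations of the \emph{fixed} core $v_j=v_j^0$, obtained by composing flows of finitely many vector fields tangent to ${\bf A}$. The map $v_{j+1}$ is produced by a completely different mechanism --- Mergelyan approximation of the derivative map into ${\bf A}_*$ followed by a pushing-out modification on $M_{j+1}\setminus M_j$ --- and there is no reason it should equal $v_j^\zeta$ for any spray parameter $\zeta$. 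Uniform closeness of $v_{j+1}$ to $v_j$ on $M_j$ does not place $v_{j+1}$ in the spray; it only places its derivative map near that of $v_j$. So ``the straight-line segment between their parameters'' is not defined, and you have not produced a path in $\CMI_*(M,\R^n)$ from $v_j$ to $v_{j+1}$.

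This is exactly the gap that the paper's Lemma~\ref{lemma:hom-2fixed} fills, and it is the main new technical ingredient of the paper. The lemma shows that two nonflat conformal minimal immersions on a compact domain $K$ that are sufficiently $\varepsilon$-close are joined by a homotopy of nonflat conformal minimal immersions close to the first one. Its proof is not a straight-line argument: one first homotopes the derivative maps $f,\tilde f\colon K\to{\bf A}_*$ through \emph{holomorphic} maps into ${\bf A}_*$ using a Stein tubular neighbourhood of the graph of $f$ (Siu, Docquier--Grauert), since the straight segment $tf+(1-t)\tilde f$ leaves ${\bf A}_*$; then one embeds each $f_t$ as the core of a period-dominating spray and invokes the implicit function theorem to correct the real periods of the whole family back to zero, with the correction vanishing at $t=0,1$; finally one integrates. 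Without this lemma (or an equivalent replacement), your proposal does not yield a continuous path in $\CMI_{*}(M,\R^n)$, and thus falls short of the conclusion of the theorem. You would also need, as in the paper's Proposition~\ref{prop:noncritical} and its Claim~\ref{claim:2fixed}, the parametric Oka property with approximation to extend the \emph{whole} homotopy from $M_j$ to $M_{j+1}$ keeping the two endpoints fixed, and a separate treatment of the critical steps of the exhaustion where the topology changes (Lemma~\ref{lemma:critical}), neither of which your sketch addresses.
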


For the notion of \emph{flux}, see Definition~\ref{def: flux}.
Since any conformal minimal immersion $u\colon M\to\R^n$ is the real part of a holomorphic null curve $M\to\C^n$ if and only if $u$ has vanishing flux, we have the following direct consequence of Theorem~\ref{thm:main}.

\begin{corollary} \label{cor:homotopic to null curve}
Let $M$ be an open Riemann surface. Every nonflat conformal minimal immersion $M\to\R^n$ ($n\geq3$) is homotopic through nonflat conformal minimal immersions to the real part of a proper holomorphic null curve.
\end{corollary}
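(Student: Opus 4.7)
The plan is to derive Corollary~\ref{cor:homotopic to null curve} directly from Theorem~\ref{thm:main} by choosing the trivial flux homomorphism. Given a nonflat conformal minimal immersion $u\colon M\to\R^n$, I would apply Theorem~\ref{thm:main} with $\mathfrak{p}=0\colon H_1(M,\Z)\to\R^n$. This produces a continuous family $u_t\in\CMI_{*}(M,\R^n)$, $t\in[0,1]$, with $u_0=u$, with $u_1$ proper, and with $\Flux(u_1)=0$.

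Next, I would invoke the characterization recalled immediately before the statement of the corollary: a conformal minimal immersion $M\to\R^n$ is the real part of a holomorphic null curve $M\to\C^n$ precisely when its flux vanishes. Since $\Flux(u_1)=0$, there exists a holomorphic null curve $z\colon M\to\C^n$ with $u_1=\Re z$. Writing $z=u_1+\igot v_1$ with $v_1$ a harmonic conjugate of $u_1$, one has the pointwise bound $|z(p)|\geq |\Re z(p)|=|u_1(p)|$ for $p\in M$.

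The only remaining point is to verify that $z$ is itself proper. For any compact $K\subset\C^n$, the real projection $\Re(K)\subset\R^n$ is compact, and
\begin{equation*}
	z^{-1}(K)\subset u_1^{-1}(\Re(K)),
\end{equation*}
since $z(p)\in K$ forces $u_1(p)=\Re z(p)\in\Re(K)$. The right-hand side is compact by properness of $u_1$, and $z^{-1}(K)$ is closed in $M$, hence compact. Therefore $z$ is a proper holomorphic null curve and $u_t$ is the desired homotopy. The entire substance of the corollary is absorbed into Theorem~\ref{thm:main}, and I do not anticipate any further obstacle.
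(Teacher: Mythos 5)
Your proof is correct and follows essentially the same route the paper takes (apply Theorem~\ref{thm:main} with $\mathfrak{p}=0$ and use the vanishing-flux characterization of real parts of null curves); the paper leaves the derivation implicit, and your verification that properness of $u_1=\Re z$ forces properness of $z$ correctly fills in the one point that deserves a sentence.
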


To prove the theorem, we were not able to employ the Oka principle for sections of ramified holomorphic maps with Oka fibres, as was the case in the proofs of results concerning approximation and interpolation by proper minimal surfaces and directed holomorphic curves~\cite{alarcon2014null,alarcon2021minimal}. The reason is that there is no version of the \emph{Parametric Oka Property with Approximation and Interpolation}~\cite[Theorem~5.4.4]{forstneric2017stein} for when the codomain of sections depends continuously on the parameter. Instead, our approach follows ideas developed in the parametric setting, in particular~\cite{forstneric2019parametric}, but paying additional attention in order to guarantee properness. 
The main new ingredient in our proof is Lemma~\ref{lemma:hom-2fixed}, which roughly says that, given a nonflat conformal minimal immersion $u\colon K\to\R^n$ from a smoothly bounded compact domain $K$ in an open Riemann surface, there exists a number $\varepsilon>0$, such that if there is a nonflat conformal minimal immersion $\tilde{u}\colon K\to\R^n$ $\varepsilon$-close to $u$ on $K$, then they are homotopic through nonflat conformal minimal immersions, and all the maps in the homotopy are close to the given one on $K$.
Although we use Lemma~\ref{lemma:hom-2fixed} with the aim to construct a proper map, it has an implication in a rather different direction which we explain in Section~\ref{sec:bordered}.
Furthermore, in none of the proofs we use the special geometry of the null quadric~\eqref{eq:null quadric}, which enables us to extend the results to a more general family of holomorphic immersions directed by Oka cones. This is treated in Section~\ref{sec:directed}.

We also have the following two corollaries, which provide information on the topology of the space of proper nonflat conformal minimal immersions.

\begin{corollary} \label{cor:surjection CMI}
The inclusion
\begin{equation} \label{eq:whe CMI}
	\CMI_{*}^{p}(M,\R^n) \longhookrightarrow \CMI_{*}(M,\R^n)
\end{equation} 
of the space of proper nonflat conformal minimal immersions $M\to\R^n$ into the space of nonflat conformal minimal immersions $M\to\R^n$ induces a surjection of path components.
\end{corollary}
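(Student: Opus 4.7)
The corollary is essentially a direct consequence of Theorem~\ref{thm:main}, so the proof plan is short and conceptual rather than technical. The induced map on path components is the function
\begin{equation*}
	\iota_{*}\colon \pi_0\!\left(\CMI_{*}^{p}(M,\R^n)\right) \longrightarrow \pi_0\!\left(\CMI_{*}(M,\R^n)\right)
\end{equation*}
sending the class of a proper nonflat conformal minimal immersion $v$ in $\CMI_{*}^{p}(M,\R^n)$ to its class in the larger space $\CMI_{*}(M,\R^n)$. Surjectivity of $\iota_{*}$ amounts to showing that every path component of $\CMI_{*}(M,\R^n)$ contains at least one proper element.

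To verify this, I would fix an arbitrary $u\in\CMI_{*}(M,\R^n)$ representing a path component $\mathcal{C}\subset\CMI_{*}(M,\R^n)$, and then invoke Theorem~\ref{thm:main} with, say, the choice $\mathfrak{p}=\Flux(u)$ (the particular value of the flux is irrelevant here, since we only care about the existence of the deformation). Theorem~\ref{thm:main} produces a continuous family $u_t\colon M\to\R^n$, $t\in[0,1]$, of nonflat conformal minimal immersions with $u_0=u$ and $u_1$ proper. The assignment $t\mapsto u_t$ is then, by construction, a continuous path in $\CMI_{*}(M,\R^n)$ connecting $u$ to $u_1$, so that $u_1\in\mathcal{C}$. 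Since $u_1\in\CMI_{*}^{p}(M,\R^n)$, the class $[u_1]\in\pi_0(\CMI_{*}^{p}(M,\R^n))$ is a preimage of $[u]=\mathcal{C}$ under $\iota_{*}$.

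There is no real obstacle here: the substance of the result is contained entirely in Theorem~\ref{thm:main}. One small point worth emphasising is that we make no claim about the intermediate maps $u_t$ for $t<1$ being proper (and in general they will not be); this is fine because surjectivity on $\pi_0$ only requires the endpoint $u_1$ to lie in $\CMI_{*}^{p}(M,\R^n)$, while the path is allowed to leave the smaller subspace. Consequently, the corollary follows at once from Theorem~\ref{thm:main}.
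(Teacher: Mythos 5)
Your proof is correct and is exactly the intended derivation: the paper states the corollary without a separate proof because it is an immediate consequence of Theorem~\ref{thm:main}, and your argument — that the homotopy $u_t$ furnished by that theorem is a path in $\CMI_{*}(M,\R^n)$ from an arbitrary $u$ to a proper $u_1$, so every path component of $\CMI_{*}(M,\R^n)$ meets $\CMI_{*}^{p}(M,\R^n)$ — is precisely this. Your remark that the intermediate maps need not be proper is the right point to flag and is handled correctly.
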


\begin{corollary} \label{cor:connected components}
Let $M$ be an open Riemann surface and $H_1(M,\Z) \cong \Z^l$ with $l\in\Z_{+}\cup \{\infty\}$.
Then the space $\CMI_{*}^{p}(M,\R^3)$ of proper nonflat conformal minimal immersions $M\to\R^3$ has at least $2^l$ connected components.
\end{corollary}

The second statement is obtained in view of~\cite[Proposition~8.4]{alarcon2018every} and the discussion therein.

\begin{problem} \label{prob:whe CMI}
Does the inclusion~\eqref{eq:whe CMI} induce a bijection of path components? 
Is it a weak homotopy equivalence?
\end{problem}

\begin{remark}
Given Stein manifolds $X$ and $Y$ with $\dim{Y}>2\dim{X}$, every continuous map $X\to Y$ is homotopic to a proper holomorphic embedding. More precisely, by~\cite[Theorem~3.4]{forstneric2018holomorphic}, every Stein manifold with the (volume) density property is strongly universal for proper holomorphic embeddings and immersions. (For the terminology in the theorem, we refer to~\cite{forstneric2018holomorphic} and will not explain it here.) The tools used in the proof of this result are different to ours. One of the reasons why we cannot follow the same scheme is that instead of working with maps, we consider their derivatives, as we have to carefully control the periods of the derivative maps, which is a main difficulty when dealing with conformal minimal immersions.
\end{remark}

\subsection*{Organization of the paper}
We introduce the relevant definitions and notation in Section~\ref{sec:prelim}. Sections~\ref{sec:main lemma} and~\ref{sec:(non)critical} are the core of the paper. In particular, in Section~\ref{sec:main lemma}, we develop the main new technical result, Lemma~\ref{lemma:hom-2fixed}, whereas Section~\ref{sec:(non)critical} provides the noncritical and critical cases in the proof of Theorem~\ref{thm:main}; see Proposition~\ref{prop:noncritical} and Lemma~\ref{lemma:critical}. The main result, Theorem~\ref{thm:main}, is proven in Section~\ref{sec:proof-main} by a standard inductive process. Section~\ref{sec:bordered} treats minimal surfaces defined on bordered Riemann surfaces, upgrading some already known approximation results.
Finally, in Section~\ref{sec:directed}, we focus on nondegenerate directed holomorphic immersions, and by slightly adapting methods developed in the former sections, show Theorem~\ref{thm:directed}, that generalizes Theorem~\ref{thm:main}.


\section{Notation and preliminaries}
\label{sec:prelim}

We begin the preparatory section by establishing notation and recalling some basic definitions.
Denote by $\N=\{1,2,\ldots\}$ the set of natural numbers, by $\Z_+=\N\cup\{0\}$ the set of non-negative integers, and assume that $n\in\N$, $n\geq 3$. 
Let $|\cdot|$ denote the Euclidean norm in $\R^n$, and $||\cdot||_{\infty}$ the infinity norm in $\R^n$.
Let $K$ be a compact topological space and let $f:K\to\R^n$ be a continuous map. We write $\|f\|_K$ for the maximum norm of $f$ on $K$. 
Given maps $f,g\colon K\to\R^n$, we say that $f$ approximates $g$ on $K$, and write $f \approx g$, if $||f-g||_{K}<\varepsilon$ for some $\varepsilon>0$ which is sufficiently small according to the argument.
If not specified otherwise, throughout the paper, we assume that surfaces are connected.

Let $X,\ Y$ be complex manifolds and $M \subset X$ a subset of $X$. By $\mathscr{C}^r(M,Y)$, $r\in\N\cup\{0\}$, we denote the space of $r$-times continuously differentiable maps $M\to Y$, and by $\mathscr{O}(M,Y)$ the space of holomorphic maps from some unspecified neighbourhood of $M$ in $X$ to $Y$.
If, in addition, we are given a closed subset $K \subset X$, then we write $\mathscr{A}(K,Y)$ for the space of continuous maps $K\to Y$ which are holomorphic in the interior $\mathring K \subset K$ of $K$, i.e., $\mathscr{A}(K,Y) = \mathscr{C}^0(K,Y) \cap \mathscr{O}(\mathring K,Y)$.
In the case when $Y=\C$, we omit writing the latter in the corresponding spaces of functions, and therefore consider the spaces $\mathscr{C}^r(M)$, $\mathscr{O}(M)$, and $\mathscr{A}(K)$.

Let $M$ be an open Riemann surface and let $u=(u_1,\dots,u_n)\colon M\to\R^n$ ($n\geq 3$) be a conformal immersion of class $\Cscr^2$. Recall that the following statements are equivalent~\cite[Theorem~2.3.1]{alarcon2021minimal}. 
\begin{itemize}
\item[\rm (i)] $u$ is minimal.
\item[\rm (ii)] $u$ is harmonic, i.e., $dd^cu=0$.
\item[\rm (iii)] $u$ has vanishing mean curvature vector field, i.e., ${\bf H}=0$.
\item[\rm (iv)] the $\C^n$-valued $(1,0)$-form $\partial{u}=(\partial{u}_1,\dots,\partial{u}_n)$ is holomorphic and satisfies the nullity condition~\eqref{eq:null-condition}.
\item[\rm (v)] Let $\theta$ be a nowhere vanishing holomorphic $1$-form on $M$ (such exists by the Oka-Grauert principle~\cite{grauert1958analytische,grauert1957approximation,grauert1957holomorphe}; see also~\cite{gunning1967immersion}). Then the map 
	\begin{equation*}
		f=(f_1,\dots,f_n)=2\partial{u}/\theta\colon M\longrightarrow\C^n
	\end{equation*}
is holomorphic and has range in the punctured null quadric
	\begin{equation} \label{eq:null quadric}
		{\bf A}_{*}:={\bf A}\setminus\{0\}=\left\{z=(z_1,\ldots,z_n)\in\C^n: z_1^2+\cdots+z_n^2=0\right\}\setminus\{0\}\subset\C^n.
	\end{equation}
\end{itemize}

Minimal surfaces appear as the real and the imaginary parts of \emph{holomorphic null curves}, that is, holomorphic immersions $z=(z_1,\dots,z_n)\colon M\to\C^n$ ($n\geq 3$) from an open Riemann surface $M$ into $\C^n$ that satisfy $\sum_{i=1}^{n}(dz_{i})^2=0$.
Conversely, given a simply connected domain $D\subset M$ and a conformal minimal immersion $u\colon M\to\R^n$, the restriction $u|_{D}\colon D\to\R^n$ is the real part of a holomorphic null curve $z\colon D\to\C^n$.
Moreover, $u\colon M\to\R^n$ globally corresponds to the real part of a holomorphic null curve if and only if it holds that $\int_{C}d^{c}u=0$ for any closed curve $C\subset M$.
In particular, if the latter is satisfied, i.e., the conjugate differential $d^{c}u=2\Im(\partial{u})$ is an exact $1$-form on $M$, then $u$ admits a harmonic conjugate $v$, and $z=u+\imath v\colon M\to\C^n$ is a holomorphic null curve (here, $\imath^2=-1$ denotes the imaginary unit).

\begin{definition} \label{def: flux}
The \emph{flux} of a conformal minimal immersion $u\colon M\to\R^n$ is the group homomorphism $\Flux(u)\colon H_1(M,\Z)\to\R^n$ on the first homology group of $M$, defined by
\begin{equation*}
	\Flux(u) \left([C]\right) = \int_{C} 2\Im(\partial{u}) = \int_{C} d^c{u}, \quad [C]\in H_1(M,\Z).
\end{equation*}
\end{definition}

Since the above integral depends only on the homology class of a path, we will write $\Flux(u)(C)=\Flux(u)([C])$ for any closed curve $C\subset M$.
We now define a special class of \emph{generalized conformal minimal immersions} on an admissible set in an open Riemann surface. These sets appear naturally in the handlebody structure of Riemann surfaces, and consequently, in the inductive process in the proof of our main result.

\begin{definition} \label{def: admissible set}
Given an open Riemann surface $M$, an \emph{admissible set} in $M$ is a compact set $S=K\cup E$, where $K$ is a finite (or possibly empty) union of pairwise disjoint compact domains with piecewise smooth boundaries in $M$ and $E = S\setminus \mathring K$ is a finite (or possibly empty) union of pairwise disjoint smooth Jordan arcs and closed Jordan curves intersecting $K$ only at their endpoints (or not at all) and such that their intersections with the boundary $bK$ of $K$ are transverse.
\end{definition}

\begin{definition}[Generalized conformal minimal immersion] \label{def: GCMI}
Let $S=K\cup E$ be an admissible set in a Riemann surface $M$, $\theta$ be a nowhere vanishing holomorphic $1$-form on $M$ and $n\geq 3$.
A \emph{generalized conformal minimal immersion} $S\to\R^n$ is a pair $(u,f\theta)$, where $u\in\Cscr(S,\R^n)$ is such that its restriction to $\mathring S=\mathring K$ is a conformal minimal immersion, and $f\in\Ascr(S, {\bf A}_*)$ satisfies
\begin{itemize}
	\item[\rm (i)] $f\theta=2\partial{u}$ on $K$, and
	\item[\rm (ii)] for any smooth path $\alpha$ in $M$ that parametrizes a connected component of $E=\overline{S\setminus K}$, it holds that $\Re(\alpha^*(f\theta))=\alpha^*(du)=d(u\circ\alpha)$.
\end{itemize}
\end{definition}

Given an admissible set $S$, we denote the set of generalized conformal minimal immersions $S\to\R^n$ by $\GCMI(S,\R^n)$. 
The notion of nonflatness applies to this case in the following way: $(u,f\theta)\in\GCMI(S,\R^n)$ is \emph{nonflat} if and only if the map $f\in\Ascr(S,{\bf A}_*)$ is nonflat on every relatively open subset of $S$. We write
\begin{equation*}
	\GCMI_{*}(S,\R^n) \subset \GCMI(S,\R^n)
\end{equation*}
for the subset consisting of all nonflat generalized conformal minimal immersions $S\to\R^n$.
Sometimes, we will simply denote by $u\in\GCMI(S,\R^n)$ for a generalized conformal minimal immersion $u\colon S\to\R^n$, when the corresponding holomorphic $1$-form $f\theta$ can be derived from the context.

We call a compact set $K$ in a Riemann surface $M$ \emph{holomorphically convex} or a \emph{Runge set} in $M$ if and only if $K$ has no holes in $M$, which is satisfied precisely when every function $f\in\Oscr(K)$ is the uniform limit on $K$ of functions in $\Oscr(M)$ (see~\cite[Lemma~1.12.3]{alarcon2021minimal}). Sets of this type are important as they satisfy the Runge approximation theorem.

A complex manifold $X$ is an \emph{Oka manifold} if every holomorphic map $K \to X$ from a neighbourhood of a compact convex set $K \subset \C^n$ can be approximated uniformly on $K$ by entire maps $\C^n \to X$ (see~\cite[Definition~5.4.1]{forstneric2017stein}.)


\section{Nearby conformal minimal surfaces are isotopic}
\label{sec:main lemma}

This section is devoted to the main new ingredient, Lemma~\ref{lemma:hom-2fixed}. It says that given a nonflat conformal minimal immersion $u\colon K\to\R^n$ defined on a smoothly bounded compact domain $K$, if there is another nonflat conformal minimal immersion $\tilde{u}\colon K\to\R^n$ uniformly close to $u$ on $K$, then they are homotopic through nonflat conformal minimal immersions. This somehow upgrades the property of being close to being homotopic, which is one of the major features we employ in the proof of the main result. 
As explained in Section~\ref{sec:bordered}, it enables us to improve some other results concerning approximation of minimal surfaces defined on compact bordered Riemann surfaces, and we expect to establish further applications.

\begin{lemma} \label{lemma:hom-2fixed}
Let $K$ be a smoothly bounded compact domain in an open Riemann surface $M$, $u\in\CMI_{*}(K,\R^n)$, $x_0\in \mathring K$ and $\mu>0$.
There is an $\varepsilon>0$ such that if $\tilde{u}\in\CMI_{*}(K,\R^n)$ with $|\tilde{u}-u|<\varepsilon$ on $K$ and $u(x_0)=\tilde{u}(x_0)$, then there exists a homotopy of nonflat conformal minimal immersions $u_t\colon K\to \R^n$, $t\in [0,1]$, satisfying the following.
\begin{itemize}
\item[\rm (i)] $u_0=u$.
\item[\rm (ii)] $u_1=\tilde{u}$.
\item[\rm (iii)] $|u_t-u|<\mu$ on $K$ for all $t\in [0,1]$.
\item[\rm (iv)] $u_t(x_0)=u(x_0)$ for all $t\in [0,1]$.
\end{itemize}
\end{lemma}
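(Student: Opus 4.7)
The plan is to work with derivative data rather than the maps themselves. Let $\theta$ be a nowhere vanishing holomorphic $1$-form on $M$, and set $f := 2\partial u/\theta$ and $\tilde f := 2\partial \tilde u/\theta$ in $\Oscr_{*}(K,{\bf A}_{*})$. Passing, if necessary, to a slightly smaller smoothly bounded subdomain (across which both immersions extend), interior Cauchy estimates for holomorphic $1$-forms upgrade the hypothesis $\|\tilde u - u\|_K < \varepsilon$ to an estimate $\|\tilde f - f\|_K = O(\varepsilon)$. Thus $\tilde f$ can be made arbitrarily close to $f$ in $\Oscr(K,\C^n)$ by shrinking $\varepsilon$.

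The first step is to construct a continuous path $f_t\colon K\to {\bf A}_{*}$ of nonflat holomorphic maps from $f$ to $\tilde f$. Since ${\bf A}_{*}$ is a smooth complex submanifold of $\C^n\setminus\{0\}$, there exists a holomorphic retraction $\rho\colon V\to{\bf A}_{*}$ from an open neighbourhood $V$ of the compact set $f(K)$. For $\varepsilon$ small, the segment $(1-t)f(x)+t\tilde f(x)$ lies in $V$ for all $(t,x)\in[0,1]\times K$, and
\[
    f_t(x) := \rho\bigl((1-t)f(x)+t\tilde f(x)\bigr)
\]
defines a continuous family with $f_0=f$, $f_1=\tilde f$, and $\|f_t-f\|_K=O(\varepsilon)$ uniformly in $t$. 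Each $f_t$ is nonflat for $\varepsilon$ small since nonflatness is an open condition in $\Oscr(K,{\bf A}_{*})$.

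The main technical point is to modify $f_t$ so that, for every $t$, the real periods $\Re\int_{C_j} f_t\theta$ vanish, where $C_1,\ldots,C_N$ is a basis of $H_1(K,\Z)$. Nonflatness of $f$ provides, via \cite[Lemma~3.2.1]{alarcon2021minimal}, a holomorphic period-dominating spray $F\colon K\times B\to{\bf A}_{*}$ with $F(\cdot,0)=f$, $B\subset\C^m$ a ball around the origin (for an appropriate $m$), such that $\zeta\mapsto \bigl(\int_{C_j}F(\cdot,\zeta)\theta\bigr)_{j=1}^N\in(\C^n)^N$ has surjective differential at $\zeta=0$. Define
\[
    \Phi_t(x,\zeta) := \rho\bigl(F(x,\zeta)+f_t(x)-f(x)\bigr),
\]
so that $\Phi_0 = F$, $\Phi_t(\cdot,0)=f_t$, and period-domination of $\Phi_t$ at $\zeta=0$ persists uniformly in $t$ for $\varepsilon$ small. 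Applying a uniform implicit function argument to
\[
    G(t,\zeta) := \bigl(\Re\textstyle\int_{C_j}\Phi_t(\cdot,\zeta)\theta\bigr)_{j=1}^N \in \R^{nN},
\]
with $G(0,0)=G(1,0)=0$ and $\|G(t,0)\|=O(\varepsilon)$, yields a continuous $\zeta\colon[0,1]\to B$ with $\zeta(0)=0$, $G(t,\zeta(t))=0$, and $\|\zeta(t)\|=O(\varepsilon)$; local uniqueness of the solution together with $G(1,0)=0$ then forces $\zeta(1)=0$.

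Finally, set $g_t := \Phi_t(\cdot,\zeta(t))\in\Oscr_{*}(K,{\bf A}_{*})$ and
\[
    u_t(x) := u(x_0)+\int_{x_0}^x \Re(g_t\theta),
\]
the integration being independent of path by the vanishing of real periods. Then $u_0=u$ and $u_1=\tilde u$ by the fundamental theorem of calculus (using $\tilde u(x_0)=u(x_0)$ for the latter), $u_t(x_0)=u(x_0)$ is immediate, and $\|u_t-u\|_K<\mu$ holds for $\varepsilon$ small enough depending on $\mu$ and $K$; nonflatness of $u_t$ follows from that of $g_t$. The main obstacle is the period-correction step: the standard period-dominating spray eliminates the period obstruction at a single point, whereas here a correction must be produced coherently along the whole parameter interval while closing up to zero at both endpoints. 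This is resolved by the modified spray $\Phi_t$, whose zero-section is precisely the prescribed homotopy $f_t$, combined with the uniform implicit function argument above.
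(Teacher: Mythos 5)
Your overall plan is the same as the paper's: pass to the derivative data $f=2\partial u/\theta$, $\tilde f=2\partial\tilde u/\theta$ in $\Oscr(K,{\bf A}_*)$, build a homotopy $f_t$ from $f$ to $\tilde f$ in $\Oscr_*(K,{\bf A}_*)$ staying close to $f$, embed each $f_t$ as the core of a period dominating spray, correct the real periods by an implicit function argument that returns to $\zeta=0$ at $t=0,1$, and integrate to get $u_t$. That matches the structure of the paper's proof.

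The one step where you deviate, and where there is a genuine gap, is the construction of $f_t$. You assert the existence of a holomorphic retraction $\rho\colon V\to{\bf A}_*$ from a $\C^n$-neighbourhood $V$ of the compact set $f(K)$ and set $f_t=\rho\circ((1-t)f+t\tilde f)$. Such a retraction is not available by any standard theorem: ${\bf A}_*$ is not a Stein manifold (it is a cone minus its vertex, i.e.\ a $\C^*$-bundle over the compact quadric ${\bf Q}^{n-2}$), so the Docquier--Grauert tubular neighbourhood theorem does not apply to ${\bf A}_*$; nor is ${\bf A}_*$ closed in $\C^n$; nor is $\C^n\setminus\{0\}$ Stein for $n\ge2$. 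Even though the normal bundle of the hypersurface ${\bf A}_*$ is trivial, producing an actual holomorphic retraction near a given compact subset requires that compact to sit in a Stein open subset of ${\bf A}_*$, which you do not show. The paper circumvents exactly this difficulty by applying Siu's theorem and the Docquier--Grauert theorem not to ${\bf A}_*$ but to the \emph{graph} of $f$ inside $W\times{\bf A}_*$, where $W$ is a Stein neighbourhood of $K$: the graph is biholomorphic to $W$, hence a closed Stein submanifold of $W\times{\bf A}_*$, so it has a holomorphic tubular neighbourhood, and the linear interpolation between the graphs of $f$ and $\tilde f$ is carried out there. Your proof would be repaired by replacing the retraction onto ${\bf A}_*$ by this graph retraction (and likewise in the definition of $\Phi_t$, which again invokes $\rho$).

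A smaller point: you justify $\zeta(1)=0$ by ``local uniqueness of the solution,'' but the equation $G(t,\zeta)=0$ is underdetermined (the real differential $D_\zeta G(t,0)$ is surjective but far from injective, since period domination supplies more spray parameters than equations), so there is no local uniqueness. The standard way to get $\zeta(0)=\zeta(1)=0$ is to solve by a contraction/Newton iteration using a continuously chosen right inverse $R(t)$ of $D_\zeta G(t,0)$, starting from $\zeta\equiv 0$; then $G(t,0)=0$ for $t\in\{0,1\}$ forces the iterates, and hence the limit, to remain $0$ at those parameters. This is what the paper's appeal to the implicit function theorem encodes.
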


\begin{proof}
Assume that $u\in\CMI_{*}(K,\R^n)$ is as in the statement of the lemma. 
Fix a nowhere vanishing holomorphic $1$-form $\theta$ on $M$~\cite{gunning1967immersion}. 
Pick a family of smooth oriented embedded arcs and closed Jordan curves $\Cscr=\{C_1,\dots,C_l\}$ in $M$ that forms a Runge homology basis of $H_1(K,\Z)$, i.e., the union $C=\bigcup_{i=1}^{l}C_i$ is a Runge subset of $K$. We associate to $\Cscr$ the \emph{period map}
\begin{eqnarray} \label{eq:period map}
	\Pscr &=& \left(\Pscr_1,\dots,\Pscr_l\right) \colon \Ascr(K,{\bf A}_*) \longrightarrow \left(\C^n\right)^l, \nonumber \\
	\Pscr_i(h) &=& \int_{C_i}h\theta \in \C^n, \quad h\in\Ascr(K,{\bf A}_*), \ i=1,\dots,l.
\end{eqnarray}
Consider the nonflat holomorphic map
\begin{equation*}
	f:=2\partial u/\theta\colon K\longrightarrow{\bf A}_*;
\end{equation*}
recall that $u$ is nonflat by assumption.
For every $i=1,\dots,l$, pick points $p_{i,k}\in C_i$ and holomorphic vector fields $V_{i,k}$ on $\C^n$, $k=1,\dots,n$, tangent to ${\bf A}$ such that $\span\{V_{i,k}(f(p_{i,k}))\colon 1\leq k\leq n\} = \C^n$. Denote by $\phi_{\zeta}^{i,k}$ the flow of $V_{i,k}$, and write $\zeta=(\zeta_1,\dots,\zeta_l)\in (\C^n)^l$ with $\zeta_i=(\zeta_{i,1},\dots,\zeta_{i,n})\in\C^n$.
Then, by~\cite[Lemma~3.2.1]{alarcon2021minimal}, for every pair $(i,k)$, we can choose a smooth function $h_{i,k}\colon C\to\C$ supported on a short arc around the point $p_{i,k}\in C_i$ in such a way that the nonflat holomorphic map $f$ embeds as the core $f^0=f$ of the \emph{period dominating spray}
\begin{equation} \label{eq:def-spray}
	f^{\zeta}(p) = \phi_{h_{1,1}(p)\zeta_{1,1}}^{1,1}\circ\dots\circ \phi_{h_{l,n}(p)\zeta_{l,n}}^{l,n} \left(f(p)\right)\in{\bf A}_*,
\end{equation} 
that is well-defined for $p\in C$ and $\zeta\in U$, where $0\in U\subset\C^{ln}$ is a neighbourhood of the origin.
The period dominating property means that the differential
\begin{equation*}
	U\ni\zeta \longmapsto \frac{d}{d\zeta}\Big|_{\zeta=0} \Pscr \left(f^{\zeta}\right)=\frac{d}{d\zeta}\Big|_{\zeta=0} \left(\int_{C_i}f^{\zeta}\theta \right)_{i=1,\dots,l} \in \C^{ln}
\end{equation*}
is surjective.

Let us assume that we are given another map $\tilde{f}\in\Oscr(K,{\bf A}_*)$ close to $f$ on $K$. We construct a homotopy of nonflat holomorphic maps $f_t$ ($t\in[0,1]$) with $f_0=f$ and $f_1=\tilde{f}$, that is close to $f$ on $K$ for all $t\in[0,1]$.
More precisely, we claim that there exists a number $\varepsilon_1>0$ satisfying the following: if $\tilde{f}\colon K\to{\bf A}_*$ is a nonflat holomorphic map such that $|\tilde{f}-f|<\delta$ on $K$ for some number $0<\delta\leq\varepsilon_1$, then there exists a homotopy $f_t\colon K\to{\bf A}_*$ of nonflat holomorphic maps such that $f_0=f$, $f_1=\tilde{f}$ and $|f_t-f|<\delta$ on $K$ for every $t\in [0,1]$.
Indeed, by Siu~\cite{siu1976every} and Grauert's \emph{tubular neighbourhood theorem}~\cite{docquier1960levisches} (see also~\cite[Theorem~3.3.3]{forstneric2017stein}), the graph of $f$ over a small neighbourhood $W$ of $K$ (on which $f$ and $\tilde{f}$ are holomorphic) has an open Stein neighbourhood in $W\times{\bf A}_*$, which is biholomorphic to an open neighbourhood of the zero section in the normal bundle to the graph, and there is a homotopy of holomorphic maps in this Stein neighbourhood. The claim then follows.

We choose the number $\varepsilon_1>0$ so small, such that if $\tilde{f}\in\Oscr(K,{\bf A}_*)$ is as above and the holomorphic $1$-forms $f_t\theta$ have vanishing real periods for $t\in [0,1]$, i.e., $\Re\Pscr(f_t)=0$ (see~\eqref{eq:period map}), then 
\begin{equation} \label{eq:lemma-u_t} 
	u_t(x) := u(x_0) + \Re\int_{x_0}^{x} f_t\theta, \quad x\in K, \ t\in[0,1],
\end{equation}
defines a homotopy of nonflat conformal minimal immersions $K\to\R^n$ that satisfy $|u_t-u|<\mu$ on $K$ for every $t\in [0,1]$. (Note that $u_t(x_0)=u(x_0)$, $u_0=u$.)

Now, assuming that we have $f$ and the homotopy $f_t$ from above, we shall correct the real periods of holomorphic $1$-forms $f_t\theta$ to zero for all $t\in[0,1]$. (Observe that at the moment, only $\Re\Pscr(f_0)=0$.)
Since $f_t$ are nonflat on $K$, $t\in[0,1]$, we can replace $f$ by $f_t$ as the core of the period dominating spray~\eqref{eq:def-spray} of nonflat holomorphic maps
\begin{equation*}
	f_{t}^{\zeta}\colon K\longrightarrow{\bf A}_*, \quad \zeta\in\B, \ t\in[0,1],
\end{equation*}
depending holomorphically on a parameter $\zeta$ in a ball $0\in\B\subset\C^{ln}$. That is, $f_{t}^0=f_t$ and the differential of the period map $\zeta \mapsto \Pscr(f_{t}^{\zeta})$ at $\zeta=0$ is surjective for every $t\in[0,1]$.

There exists a number $\varepsilon_2\in(0,\varepsilon_1]$ such that the following is satisfied: if $f_t\colon K\to {\bf A}_*$, $t\in[0,1]$, is a homotopy of nonflat holomorphic maps such that $f_0=f$, $|f_t-f|<\varepsilon_2$ on $K$ ($t\in[0,1]$) and $\Re\Pscr(f_1)=0$, then the implicit function theorem furnishes us with a continuous map
\begin{equation} \label{eq:lemma-zeta}
	\zeta\colon[0,1]\longrightarrow r\B\subset\C^{ln}, \quad \zeta(0)=\zeta(1)=0,
\end{equation}
where $r\in(1/2,1)$, such that the homotopy of nonflat holomorphic maps
\begin{equation*}
	g_t := f_{t}^{\zeta(t)} \colon K\longrightarrow{\bf A}_*, \quad t\in[0,1],
\end{equation*}
satisfies that $\Re\Pscr(g_t)=\Re\Pscr(f_0)=\Re\Pscr(f)=0$, $|g_t-f|<\varepsilon_1$ on $K$ for all $t\in[0,1]$, and $g_0=f$, $g_1=f_1$ (see~\eqref{eq:lemma-zeta}).

Taking into account that $K$ is compact and using the Cauchy estimates, we can find a number $\varepsilon>0$ such that if $\tilde{u}\in\CMI_{*}(K,\R^n)$ is a nonflat conformal minimal immersion that satisfies $|\tilde{u}-u|<\varepsilon$ on $K$, then $|2\partial\tilde{u}/\theta-f|<\varepsilon_2$. By the above arguments, assuming also that $\tilde{u}(x_0)=u(x_0)$, and replacing $f_t$ by $g_t$ in the definition of $u_t$~\eqref{eq:lemma-u_t}, lemma holds for this choice of $\varepsilon$.
\end{proof}

\begin{remark} \label{rem:lemma-hom2fixed}
The assumption in Lemma~\ref{lemma:hom-2fixed} that the values of conformal minimal immersions $u$ and $\tilde{u}$ coincide at the point $x_0\in\mathring K$ is purely technical, as it ensures that $u_1=\tilde{u}$ (see~\eqref{eq:lemma-u_t}), giving also property~(iv). However, this assumption is not necessary in our applications. In particular, by a slight translation of $\tilde{u}$ (note that $\tilde{u}\approx u$ on $K$), we may assume that the condition is satisfied, and then apply Lemma~\ref{lemma:hom-2fixed} to $u$ and translated $\tilde{u}$, omitting property~(iv). 
\end{remark}


\section{The noncritical and critical cases in the induction}
\label{sec:(non)critical}

In this section, we establish further technical results, which are used in the proof of Theorem~\ref{thm:main}.
As we explain in Section~\ref{sec:directed}, they may be proven in a more general setting of directed immersions from an open Riemann surface into $\C^n$.
We begin with the following proposition that corresponds to the noncritical case in the proof of Theorem~\ref{thm:main}, that is, when the topology of the domains in the inductive process remains the same.

\begin{proposition} \label{prop:noncritical}
Let $K$ and $L$ be smoothly bounded compact domains in an open Riemann surface $M$ such that $K\subset\mathring L$ and $K$ is a strong deformation retract of $L$. 
Let $u_p\in\CMI_{*}(K,\R^n)$ be a continuous family of nonflat conformal minimal immersions for $p\in [0,1]$, $u_0\in \CMI_{*}(L,\R^n)$, and assume that $u_1$ maps $bK$ into $\R^n\setminus \{0\}$. Pick a number 
\begin{equation} \label{eq:delta}
	0<\delta< \min\left\{||u_1(x)||_{\infty}\colon x\in bK\right\}
\end{equation}
and let $\varepsilon>0$.
There exists a continuous family $\hat{u}_p \in \CMI_{*}(L,\R^n)$, $p\in[0,1]$, satisfying the following conditions.
\begin{itemize}
\item[\rm (i)] $|\hat{u}_p-u_p|<\varepsilon$ on $K$ for every $p\in[0,1]$.

\item[\rm (ii)] $\hat{u}_0=u_0$ on $L$.

\item[\rm (iii)] $||\hat{u}_1(x)||_{\infty}>\delta$ for all $x\in L\setminus\mathring K$ and $||\hat{u}_1(x)||_{\infty}>1/\varepsilon$ for all $x\in bL$.

\item[\rm (iv)] $\Flux(\hat{u}_1) = \Flux(u_1)$.

\item[\rm (v)] $\hat{u}_1$ is injective if $n\geq 5$.
\end{itemize}
\end{proposition}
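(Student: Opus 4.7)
The plan is to carry out the standard parametric noncritical step for nonflat conformal minimal immersions, combining a parametric Mergelyan-type approximation with the classical ``pushing to infinity'' technique, and then using general position for the embedding statement.

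First I would extend the family $u_p$ from $K$ to $L$. Since $K$ is a strong deformation retract of $L$, the inclusion $K \hookrightarrow L$ induces an isomorphism $H_1(K,\Z) \cong H_1(L,\Z)$, so the Runge homology basis $\Cscr$ of $H_1(K,\Z)$ used in the proof of Lemma~\ref{lemma:hom-2fixed} also generates $H_1(L,\Z)$, and the period map~\eqref{eq:period map} on $L$ is the same one on the shared generators. Using the parametric Mergelyan approximation theorem for nonflat conformal minimal immersions, with interpolation at $p=0$ (where $u_0$ is already defined on $L$) and with period fixing via a period-dominating spray (as in the construction~\eqref{eq:def-spray}), I obtain a continuous family $v_p \in \CMI_{*}(L,\R^n)$ with $v_0=u_0$, $v_p \approx u_p$ on $K$ uniformly in $p$, and $\Flux(v_p)=\Flux(u_p)$ for every $p\in[0,1]$; in particular $\Flux(v_1)=\Flux(u_1)$.

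Next I would modify $v_1$ outside $\mathring K$ so as to achieve condition~(iii). Since $K$ is a deformation retract of $L$, each connected component of $L\setminus\mathring K$ is an annular region, and by adding to $L\setminus\mathring K$ finitely many short Jordan arcs emanating from $bK$ I form an admissible set $S\subset L$ (in the sense of Definition~\ref{def: admissible set}) along which I can prescribe arbitrarily large values of $\|\cdot\|_\infty$, then extend back to a generalized conformal minimal immersion on a neighborhood of $S$, keeping $v_1$ essentially unchanged on $K$. A further application of Mergelyan-with-period-fixing on $L$ produces a nonflat conformal minimal immersion $\hat u_1\in\CMI_{*}(L,\R^n)$ which approximates $v_1$ on $K$, satisfies $\|\hat u_1(x)\|_\infty>\delta$ for $x\in L\setminus\mathring K$ and $\|\hat u_1(x)\|_\infty>1/\varepsilon$ for $x\in bL$ (by choosing the push large enough and using~\eqref{eq:delta}), and has $\Flux(\hat u_1)=\Flux(v_1)=\Flux(u_1)$ via the period-dominating spray correction. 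Since $\hat u_1$ is uniformly close to $v_1$ on $K$, Lemma~\ref{lemma:hom-2fixed} (combined with Remark~\ref{rem:lemma-hom2fixed} to drop the basepoint assumption) produces a continuous path of nonflat conformal minimal immersions on $K$ from $v_1$ to $\hat u_1$ that stays close to $v_1$; after choosing the approximations finely enough, this path lifts to a homotopy in $\CMI_{*}(L,\R^n)$ from $v_1$ to $\hat u_1$ with fixed flux. Concatenating the family $v_p$ with this homotopy (and reparametrizing $p\in[0,1]$) yields the required family $\hat u_p$ satisfying (i), (ii), (iii), (iv).

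For (v), when $n\geq 5$, I would apply a general position argument: in codimension $n-4\geq 1$, self-intersections of an immersed real surface are generically empty, so a small perturbation of $\hat u_1$ inside $\CMI_{*}(L,\R^n)$ (using once more the period-dominating spray to remain within the flux-preserving and pushing conditions) yields an injective map without disturbing (i)--(iv); the perturbation can be inserted into the homotopy $\hat u_p$ near $p=1$.

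The hard part will be the second paragraph: simultaneously achieving the strong lower bound $\|\hat u_1\|_\infty > 1/\varepsilon$ on $bL$ while staying within $\varepsilon$ of $u_1$ on $K$ and exactly preserving $\Flux(u_1)$. This requires careful truncation of the ``push'' to have the support concentrated well inside $L\setminus K$, and enough holomorphic flexibility on the added arcs in the admissible-set construction to guarantee that the period-dominating spray can absorb the perturbation; these are the ingredients that make it possible to decouple the approximation on $K$ from the growth estimates on $bL$.
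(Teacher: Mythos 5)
Your proposal is correct and takes essentially the same route as the paper's proof, built out of the same three ingredients: a parametric Mergelyan/Oka extension with period-dominating spray correction, a non-parametric ``push to infinity'' for the endpoint map, and Lemma~\ref{lemma:hom-2fixed} to connect the original to the pushed map. The organization differs slightly: you first extend the whole family $u_p$ to $v_p\in\CMI_*(L,\R^n)$ (fixing $p=0$), then push $v_1$ to $\hat u_1$, then connect on $K$ via Lemma~\ref{lemma:hom-2fixed} and lift again to $L$; the paper instead first pushes $u_1$ to $u_2\in\CMI_*(L,\R^n)$ by citing~\cite[Lemma~3.11.1, Theorem~3.4.1]{alarcon2021minimal}, connects $u_1$ to $u_2$ on $K$ via Lemma~\ref{lemma:hom-2fixed}, and then does a \emph{single} parametric extension of the concatenated homotopy on $[0,2]$, fixing both endpoints (isolated as Claim~\ref{claim:2fixed}), before reparametrizing via a diffeomorphism of $[0,2]$ onto $[0,1]$ that compresses $[1,2]$ into a short interval $[1-\tau,1]$. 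Your ordering costs two applications of the parametric Mergelyan rather than one, but is logically sound. Two places where you are abbreviating what must actually be done: the ``lift'' of the $K$-homotopy from $v_1$ to $\hat u_1$ up to $L$ is precisely Claim~\ref{claim:2fixed} again (parametric Oka/Mergelyan with interpolation at both endpoints and a period correction), and for the push and the embeddedness you should cite~\cite[Lemma~3.11.1, Theorem~3.4.1]{alarcon2021minimal} directly rather than reconstruct the admissible-set labyrinth or appeal to soft general position, since the latter needs to be carried out inside the class of conformal minimal immersions while preserving the periods, which is exactly what those results package. Finally, your concatenate-and-reparametrize step should be accompanied by the $\tau$-compression and continuity argument the paper spells out in order to verify condition~(i) for $p$ near $1$.
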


\begin{proof}
Pick a number $\varepsilon_1\in(0,\varepsilon/3)$.
Note that $u_1\in\CMI_{*}(K,\R^n)$ maps $bK$ into $\R^n\setminus\{0\}$~\eqref{eq:delta}. Hence, by~\cite[Lemma~3.11.1,~Theorem~3.4.1]{alarcon2021minimal}, we may approximate $u_1$ uniformly on $K$ by $u_2\in\CMI_{*}(L,\R^n)$ such that $||u_2(x)||_{\infty}>\delta$ for all $x\in L\setminus\mathring K$, $||u_2(x)||_{\infty}>1/\varepsilon$ for all $x\in bL$, $\Flux(u_2)=\Flux(u_1)$, and $u_2$ is injective if $n\geq 5$.
Further, Lemma~\ref{lemma:hom-2fixed}, applied to $(u,\tilde{u})=(u_1,u_2)$, provides a homotopy $u_p\in\CMI_{*}(K,\R^n)$, $p\in[0,2]$, with the following properties:
\begin{itemize}
	\item[\rm (i.1)] $u_p$ agrees with the given one for every $p\in[0,1]$;
	\item[\rm (ii.1)] $u_0\in\CMI_{*}(L,\R^n)$;
	\item[\rm (iii.1)] $u_2\in\CMI_{*}(L,\R^n)$, $||u_2(x)||_{\infty}>\delta$ for all $x\in L\setminus\mathring K$, $||u_2(x)||_{\infty}>1/\varepsilon$ for all $x\in bL$, $\Flux(u_2)=\Flux(u_1)$, $u_2$ is injective if $n\geq 5$;
	\item[\rm (iv.1)] $|u_p - u_1|<\varepsilon_1$ on $K$ for every $p\in[1,2]$.
\end{itemize}
In the next step, we shall approximate $u_p\in\CMI_{*}(K,\R^n)$, $p\in[0,2]$, uniformly on $K$ by another homotopy of nonflat conformal minimal immersions $L\to\R^n$, fixing those at parameters $p\in\{0,2\}$ (observe that, by (ii.1)--(iii.1), $u_0$ and $u_2$ are already defined on $L$). This is established with the following claim.
%
\begin{claim} \label{claim:2fixed}
There exists a homotopy $\tilde{u}_p\in\CMI_{*}(L,\R^n)$, $p\in[0,2]$, of nonflat conformal minimal immersions that uniformly approximate $u_p$ on $K$ for all $p\in[0,2]$, and such that $\tilde{u}_0=u_0$ and $\tilde{u}_2=u_2$ holds on $L$.
\end{claim}
%
\begin{proof}
The proof uses the same line of arguments as the proof of~\cite[Theorem~4.1, p.~21]{forstneric2019parametric}.
Denote by $P=[0,2]$, and recall that $K\subset L$ are smoothly bounded compact domains in $M$ such that $K$ is a strong deformation retract of $L$, $u_p\in\CMI_{*}(K,\R^n)$ ($p\in P$) and $u_0, u_2\in\CMI_{*}(L,\R^n)$, meeting conditions (i.1)--(iv.1).
Pick a number $\eta\in (0,\varepsilon_1)$.
We shall find a continuous family $\tilde{u}_{p}\in\CMI_{*}(L,\R^n)$, $p\in P$, such that
\begin{itemize}
	\item[\rm (c1)] $\tilde{u}_{p} = u_p$ on $L$ for every $p\in\{0,2\}$;
	\item[\rm (c2)] $|\tilde{u}_{p} - u_{p}|<\eta$ on $K$ for every $p\in P$.
\end{itemize}
The main argument in the construction of such family is the parametric Oka property with approximation in order to approximate period dominating sprays of derivative maps of the homotopy $u_{p}$. We now explain the details.
Fix a nowhere vanishing holomorphic $1$-form $\theta$ on $M$ and choose a Runge homology basis $\Cscr=\{C_1,\dots,C_l\}$ of $H_1(K,\Z)$ (see the proof of Lemma~\ref{lemma:hom-2fixed}), which then also forms a homology basis of $H_1(L,\Z)$. Denote by $\Pscr$ the period map associated to $\Cscr$~\eqref{eq:period map}.
Consider the family of nonflat holomorphic maps 
\begin{equation*}
	f_p := 2\partial{u_p}/\theta \colon K\longrightarrow{\bf A}_*, \quad p\in P,
\end{equation*}
with $f_0$ and $f_2$ extending to the whole $L$ (as we may in view of~(ii.1)--(iii.1)).
Moreover, observe that $\Re\Pscr(f_{p})=0$ for every $p\in P$.
Since all $u_{p}$ are nonflat on $K$, we may embed $f_{p}$ as the core $f_{p}=f_{p,0}$ of a period dominating spray of holomorphic maps
\begin{equation*}
	f_{p,\zeta} \colon K\longrightarrow{\bf A}_*, \quad (p,\zeta) \in P\times\B,
\end{equation*}
that depends holomorphically on the parameter $\zeta\in\B\subset\C^N$ in a ball $0\in\B\subset\C^N$ for some $N\in\N$. In particular, 
\begin{equation*}
	\B\ni\zeta \longmapsto \frac{d}{d\zeta}\Big|_{\zeta=0} \Pscr \left(f_{p,\zeta}\right)=\frac{d}{d\zeta}\Big|_{\zeta=0} \left(\int_{C_i}f_{p,\zeta}\theta \right)_{i=1,\dots,l} \in \C^{ln}
\end{equation*}
is surjective for every $p\in P$.
Furthermore, as ${\bf A}_*$ is an Oka manifold and $K$ is a strong deformation retract of $L$, we may use~\cite[Theorem~5.4.4]{forstneric2017stein} to approximate $f_{p,\zeta}\colon K\to{\bf A}_*$ uniformly on $K$ and uniformly w.r.t. $(p,\zeta)$ by a holomorphic spray of maps
\begin{equation*}
	g_{p,\zeta} \colon L\longrightarrow{\bf A}_*, \quad (p,\zeta)\in P\times r\B,
\end{equation*}
for some $r\in (1/2,1)$, and such that it satisfies $g_{p,0}=f_p$ for $p\in\{0,2\}$.
Provided that approximation of $f_{p,\zeta}$ by $g_{p,\zeta}$ is sufficiently close on $K$, the implicit function theorem furnishes us with a continuous map
\begin{equation} \label{eq:zeta}
	\zeta \colon P \longrightarrow r\B\subset\C^N, \quad \zeta(0)=\zeta(2)=0,
\end{equation}
such that $\tilde{f}_{p} := g_{p,\zeta(p)}\colon L\to {\bf A}_*$ satisfies that $\Pscr(\tilde{f}_{p})=\Pscr(f_{p})$, $p\in P$. 
In particular, $\Re\Pscr(\tilde{f}_{p})=0$ for every $p\in P$. 
Moreover, $\tilde{f}_{p}=f_p$ on $L$ for $p\in\{0,2\}$, as $\zeta$ vanishes for these values~\eqref{eq:zeta}, and recall also that $f_0, f_2$ are defined on $L$. 
Pick a point $x_0\in K$ and consider 
\begin{equation*}
	\tilde{u}_{p}(x) := u_{p}(x_0) + \Re\int_{x_0}^{x} \tilde{f}_{p}\theta, \quad x\in L, \ p\in P.
\end{equation*}
By properties of $\tilde{f}_{p}$, the above integral is well-defined, and $\tilde{u}_{p}\in\CMI_{*}(L,\R^n)$ uniformly approximates $u_p$ on $K$ for every $p\in P$ and satisfies that $\tilde{u}_{p}=u_p$ on $L$ for $p\in\{0,2\}$.
\end{proof}
%
To finish the proof, we shall reparametrize the homotopy $\tilde{u}_p\in\CMI_{*}(L,\R^n)$, $p\in[0,2]$, to obtain a homotopy $\hat{u}_p\in\CMI_{*}(L,\R^n)$, $p\in[0,1]$, meeting the conclusions of the proposition.
There exists a small number $\tau>0$ close to $0$, such that 
\begin{equation} \label{eq:prop-tau}
	\left|u_q - u_r\right|<\varepsilon_1 \quad \text{on } K \text{ for every } q,r\in[1-\tau,1].
\end{equation} 
Define a diffeomorphism $\psi\colon [0,2]\to[0,1]$ with the following properties:
\begin{equation} \label{eq:diffeo-psi}
	\begin{cases}
		\psi(0)=0, \\
		\psi\colon [0,1]\to [0,1-\tau], \\
		\psi\colon [1,2]\to [1-\tau,1].
	\end{cases}
\end{equation}
Let $\phi:=\psi^{-1}\colon [0,1]\to [0,2]$ be its inverse; note that $\phi(0)=0$.
We may choose $\tau>0$ so small and $\psi|_{[0,1]}$ sufficiently close to identity on $[0,1]$, such that, setting 
\begin{equation*}
\hat{u}_p:=\tilde{u}_{\phi(p)}\colon L\longrightarrow\R^n, \quad p\in [0,1], 
\end{equation*}
we obtain a homotopy of nonflat conformal minimal immersions $L\to\R^n$ satisfying the conclusions of the proposition.
Indeed, $\hat{u}_0=\tilde{u}_0=u_0$, $\hat{u}_1=\tilde{u}_2=u_2$, so $||\hat{u}_1(x)||_{\infty}=||u_2(x)||_{\infty}>\delta$ for all $x\in L\setminus\mathring K$, $||\hat{u}_1(x)||_{\infty}>1/\varepsilon$ for all $x\in bL$, $\Flux(\hat{u}_1)=\Flux(u_2)=\Flux(u_1)$, and $\hat{u}_1$ is injective if $n\geq 5$.
For the uniform approximation of $u_p$ by $\hat{u}_p$ on $K$ we argue as follows.
\begin{itemize}
	\item If $p\in[0,1-\tau]$, then $|\hat{u}_p-u_p|<\varepsilon$ on $K$ by~(c2), the choice of numbers $\tau\approx 0$ and $\eta<\varepsilon_1<\varepsilon$, and the definition of $\psi$~\eqref{eq:diffeo-psi}.
	\item If $p\in[1-\tau,1]$, then $|\hat{u}_p-u_p|<|\hat{u}_p-u_{\phi(p)}|+|u_{\phi(p)}-u_1|+|u_1-u_p|<\eta + 2\varepsilon_1 < 3\varepsilon_1 < \varepsilon$ on $K$ by~(c2),~(iv.1) and~\eqref{eq:prop-tau}.
\end{itemize}
This closes the proof of the proposition.
\end{proof}

In the remaining part of this section, we show the following lemma, which is the core of the critical case in the proof of Theorem~\ref{thm:main}. It enables us to extend the homotopy of derivative maps of nonflat conformal minimal immersions from a compact set in an open Riemann surface to a smoothly embedded arc, providing a homotopy of nonflat generalized conformal minimal immersions. Additionally, we control the flux and the values of the conformal minimal immersion corresponding to time $t=1$. The latter guarantees properness, which is the main point.

\begin{lemma} \label{lemma:critical}
Let us assume the following conditions:
\begin{itemize}
\item $M$ is an open Riemann surface, $\theta$ is a nowhere vanishing holomorphic $1$-form on $M$.
\item $K$ is a compact subset of $M$.
\item $S=K\cup E$ is an admissible set in $M$ such that $E$ closes inside $K$ to a closed Jordan curve $C$ with $E=\overline{C\setminus K}$ and $K\cap E=\{x_0,x_1\}$.
\item $\mathfrak{p}\colon H_1(M,\Z)\to\R^n$ is a group homomorphism.
\item $u_0=u\in\CMI_{*}(M,\R^n)$ is a nonflat conformal minimal immersion and $f_0=f=2\partial{u}/\theta\colon M\to {\bf A}_*$ is a nonflat holomorphic map.
\item $u_p\in\CMI_{*}(K,\R^n)$ is a continuous family of nonflat conformal minimal immersions and $f_p=2\partial{u}_p/\theta\colon K\to {\bf A}_*$ is a continuous family of nonflat holomorphic maps for every $p\in P=[0,1]$.
\item $\Omega\subset\R^n$ is a domain containing $0\in\R^n$ and the point $u_1(x_1)-u_1(x_0)$.
\end{itemize}
Then there exist continuous families of maps $\tilde{f}_p\colon S\to {\bf A}_*$ and $\tilde{u}_p\colon S\to\R^n$ ($p\in P$) satisfying the following.
\begin{itemize}
\item[\rm (i)] $\tilde{u}_p=u_p$ and $\tilde{f}_p=f_p$ on $K$ for every $p\in P$.
\item[\rm (ii)] $\tilde{u}_0=u$ and $\tilde{f}_0=f$ on $S$.
\item[\rm (iii)] $(\tilde{u}_p,\tilde{f}_p\theta)\in\GCMI_{*}(S,\R^n)$, and in particular, $\Re\int_{C}\tilde{f}_p\theta=0$ for every $p\in P$.
\item[\rm (iv)] $\Im\int_{C}\tilde{f}_1\theta=\mathfrak{p}(C)$.
\item[\rm (v)] $\Re\int_{x_0}^{x} \tilde{f}_1\theta \in \Omega$ for every $x\in E$.
\end{itemize}
\end{lemma}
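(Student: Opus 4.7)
The plan is to extend the family $f_p\colon K\to {\bf A}_*$ across the arc $E$ to $\tilde f_p\in\Ascr(S,{\bf A}_*)$ depending continuously on $p\in P$, and then set $\tilde u_p|_K:=u_p$ and $\tilde u_p(x):=u_p(x_0)+\Re\int_{x_0}^x \tilde f_p\theta$ for $x\in E$. Properties (i) and (ii) then hold by construction, while (iii) is equivalent to the vanishing of the real period $\Re\int_C\tilde f_p\theta=0$ for every $p$ (this being precisely what guarantees that the piecewise-defined $\tilde u_p$ is continuous at the second endpoint of $E$, since $\tilde u_p|_K=u_p$ is already a conformal minimal immersion). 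Properties (iv) and (v) are constraints only at $p=1$.

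Parametrize $E$ as a smooth embedded arc $\alpha\colon[0,1]\to M$ with $\alpha(0)=x_0$ and $\alpha(1)=x_1\in K\cap E$. I first design a good candidate $f_1^{(0)}$ at $p=1$. Since $\Omega$ is an open connected neighborhood of $0$, pick a smooth path $\gamma\colon[0,1]\to\Omega$ with $\gamma(0)=0$ and $\gamma(1)=u_1(x_1)-u_1(x_0)$ (in the intended applications the compatibility $u_1(x_1)-u_1(x_0)\in\Omega$ is arranged). For each $t\in[0,1]$ choose real vectors $a(t),b(t)\in\R^n$ with $a(t)=\dot\gamma(t)$, $|b(t)|=|a(t)|$ and $a(t)\perp b(t)$ (possible since $n\geq 3$); the formula $f_1^{(0)}(\alpha(t)):=(a(t)+\imath b(t))/\theta(\dot\alpha(t))$ then defines a smooth map $E\to{\bf A}_*$. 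After matching boundary values with $f_1|_{K\cap E}$ (by a small rearrangement of $\gamma$ and $b$) and extending as $f_1$ on $K$, this gives $f_1^{(0)}\in\Ascr(S,{\bf A}_*)$ whose real integral from $x_0$ traces $\gamma\subset\Omega$, whose real $C$-period vanishes, and whose imaginary $C$-period can be steered---by the freedom in choosing $b(t)$ within its perpendicularity constraint---close to the target $\mathfrak{p}(C)$; any remaining mismatch will be absorbed in the next step.

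Next, interpolate continuously from $f_0^{(0)}:=f|_S$ to $f_1^{(0)}$, obtaining a continuous family $f_p^{(0)}\in\Ascr(S,{\bf A}_*)$ with $f_p^{(0)}|_K=f_p$. The real $C$-period of $f_p^{(0)}\theta$ need not vanish at intermediate $p$, and the imaginary $C$-period at $p=1$ need not equal $\mathfrak{p}(C)$. Using that each $f_p^{(0)}$ is nonflat, embed it as the core of a period-dominating spray $(p,\zeta)\mapsto f_{p,\zeta}^{(0)}\in\Ascr(S,{\bf A}_*)$ via~\cite[Lemma~3.2.1]{alarcon2021minimal}, with $\zeta\in\C^N$ realized by compactly supported flows of holomorphic vector fields tangent to ${\bf A}$ acting at finitely many points in the interior of $E$. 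The implicit function theorem then yields a continuous $\zeta\colon P\to\C^N$ with $\zeta(0)=0$ (since $f_0^{(0)}=f$ already has vanishing real $C$-period) such that $\tilde f_p:=f_{p,\zeta(p)}^{(0)}$ satisfies $\Re\int_C\tilde f_p\theta=0$ for all $p\in P$ together with $\Im\int_C\tilde f_1\theta=\mathfrak{p}(C)$.

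Defining $\tilde u_p$ by the integration formula above, properties (i)--(v) follow from the construction. The main obstacle is the interaction between (v) and the spray correction at $p=1$: the modification of the integral's shape along $E$ must be small enough to preserve $\Omega$-containment. This is arranged by choosing $\gamma$ at positive distance from $\partial\Omega$, making the spray cutoffs with narrow support and small amplitude (large enough to absorb the initial mismatch in the imaginary period, small enough that the induced perturbation of $\Re\int_{x_0}^{\cdot}\tilde f_1\theta$ remains within the buffer), and appealing to compactness of $E$. A subsidiary concern is ensuring nonflatness of $f_p^{(0)}$ on every relatively open subset of $S$, which is generic and handled by a small perturbation in the interior of $E$.
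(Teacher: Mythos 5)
Your high-level plan (extend $f_1$ across $E$, interpolate, correct periods with a spray and the implicit function theorem) resembles the paper's, but there are two genuine gaps in the execution.

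First, and most seriously, you interpolate an \emph{arbitrary} continuous family $f_p^{(0)}\colon S\to{\bf A}_*$ from $f|_S$ to $f_1^{(0)}$ and then claim the implicit function theorem, applied to a period-dominating spray $f_{p,\zeta}^{(0)}$, produces a \emph{global} continuous section $\zeta\colon P\to\C^N$ killing the real $C$-period for every $p$. But the spray is only defined for $\zeta$ in a fixed ball $\B$, and the implicit function theorem gives solutions $\zeta(p)$ only so long as $\Re\int_C f_p^{(0)}\theta$ stays within the (bounded) range achievable by $\zeta\in\B$. An uncontrolled interpolation gives no bound on the intermediate real periods, so $\zeta(p)$ may be forced out of $\B$ and the argument breaks down. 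The paper's proof avoids this by a two-step correction: it first introduces an auxiliary parameter $t\in[0,1]$ and modifies the paths on one subarc $I_1\subset E$ using~\cite[Lemma~3.1]{forstneric2019parametric} so that $\bigl|\Re\int_C\tilde\sigma_p^t\bigr|<\varepsilon$ for all $p$ once $t\geq t_0$; only then does it apply the period-dominating spray (of paths, via~\cite[Lemma~3.6]{alarcon2018every}) on a \emph{second} disjoint subarc $I_2\subset E$, where the implicit function theorem is safe because the residual periods are already uniformly small. You have no analogue of the smallness step.

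Second, you allow a spray correction $\zeta(1)\neq 0$ at $p=1$ (to steer the imaginary $C$-period onto $\mathfrak{p}(C)$), and then must argue this correction does not push $\Re\int_{x_0}^{x}\tilde f_1\theta$ out of $\Omega$, which forces a delicate balance between ``large enough to hit $\mathfrak{p}(C)$'' and ``small enough to preserve (v)'' that you acknowledge but do not resolve; in particular, the imaginary-period mismatch depends on $\mathfrak{p}(C)$ and is not automatically small, and the freedom in $b(t)$ is tied to the length and geometry of $\gamma$, which in turn interacts with the distance from $\gamma$ to $\partial\Omega$. The paper sidesteps this entirely: it first extends $f_1$ to $S$ by~\cite[Lemma~3.3]{alarcon2019interpolation} so that $\Re\int_C f_1\theta=0$, $\Im\int_C f_1\theta=\mathfrak{p}(C)$, and $\Re\int_{x_0}^x f_1\theta\in\Omega$ hold \emph{exactly}, and then arranges the whole correction machinery so that $\zeta\equiv 0$ on $Q\times[0,1]$ with $Q=\{0,1\}$, hence $\tilde f_1=f_1$ is never touched and (iv), (v) are inherited for free. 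Your explicit hand-built $f_1^{(0)}$ via $(a(t)+\imath b(t))/\theta(\dot\alpha(t))$ is a reasonable sketch of how such an extension can be produced, but the boundary matching at $K\cap E$ and the steering of $\Im\int_E$ are precisely the content of the cited lemma, and leaving them as ``a small rearrangement'' and steering ``close to'' the target (rather than exactly) reintroduces the problem above.
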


\begin{proof}
The proof consists of two parts. In the first one, we extend the family $f_p \in \Oscr(K,{\bf A}_*)$ to a continuous family of nonflat continuous maps $f_p\colon S\to{\bf A}_*$ ($p\in P$) meeting properties (i), (ii), (iv) and (v), whereas in the second part, we modify their real periods in order to obtain the family $\tilde{f}_p$ fulfilling conclusions of the lemma.

Recall that $f_0=f\colon M\to{\bf A}_*$ and $f_p=2\partial{u}_p/\theta\colon K\to{\bf A}_*$ ($p\in P$) are nonflat holomorphic maps.
By~\cite[Lemma~3.3]{alarcon2019interpolation}, we may extend $f_1$ to a nonflat continuous map $f_1\colon S\to{\bf A}_*$ such that $f_1$ is homotopic to $f_0$ on $S$ (note that $f_0\simeq f_1$ on $K$ by assumption), $\Re\int_{C}f_1\theta=0$, $\Im\int_{C}f_1\theta=\mathfrak{p}(C)$ and $\Re\int_{x_0}^{x}f_1\theta\in\Omega$ for every $x\in E$.
Since $f_0\simeq f_p\simeq f_1$ on $K$ ($p\in P$) and $f_0\simeq f_1$ on $S$, there exists a homotopy $f_p\colon S\to{\bf A}_*$ ($p\in P$) of nonflat continuous maps on $S$ that agree with the already given ones on $K$. However, it need not hold that $\Re\int_{C}f_p\theta=0$ for all $p\in P$.
To obtain the family $\tilde{f}_p$ ($p\in P$) with the desired properties, we follow the idea in the proof of~\cite[Claim, p.~26]{forstneric2019parametric}. 

Set $Q=\{0,1\}\subset P$.
Define a continuous family of nonflat continuous maps
\begin{equation} \label{eq:lemmaarcs-f_pt}
	f_{p}^{t}:=f_p\colon S\longrightarrow{\bf A}_*, \quad (p,t)\in P\times[0,1],
\end{equation}
i.e., $f_{p}^{t}$ is a homotopy which does not depend on $t\in[0,1]$.
Consider the closed curve $C\subset S$ as a union of closed arcs $C=C_1\cup C_2\cup C_3$, where $C_3=C\cap K$, and parametrize $C$ by a real analytic map $\gamma\colon [0,3]\to C$ such that $C_{i+1}=\gamma([i,i+1])$ for $i=0,1,2$. In particular, we have that $S=K\cup E=K\cup C$ and $E=C_1\cup C_2$. 
Furthermore, pick a number $\eta>0$ close to zero and let
\begin{equation*}
	\begin{cases}
		I_1=[\eta, 1-\eta], \\ 
		I_2=[1+\eta,2-\eta], \\
		C_{i}'=\gamma(I_i) \quad \text{for } i=1,2.
	\end{cases}
\end{equation*}
Define a continuous family of paths $\sigma_{p}\colon[0,3]\to{\bf A}_*$ ($p\in P$) by the equation
\begin{equation} \label{eq:sigma_p}
	\sigma_{p}(s)ds := f_{p} \left(\gamma(s)\right)\theta\left(\gamma(s),\gamma'(s)\right)ds, \quad s\in[0,3].
\end{equation}
(Recall that $f_{p}^{t}=f_p$~\eqref{eq:lemmaarcs-f_pt} holds on $S$.)
We shall change $\sigma_{p}$ on $I_1\cup I_2$ in such a way that the modified maps $\tilde{\sigma}_{p}^{t}$ will satisfy $\Re\int_{C}\tilde{\sigma}_{p}^{t}=0$ for all $(p,t)\in P\times[t_0,1]$, for some value $t_0\in(0,1)$. At the same time, $\sigma_{p}$ will remain unchanged on $[0,3]\setminus(I_1\cup I_2)$.

In the first step, consider the interval $I_1$. Set $\alpha_p:=\int_{C}\sigma_p \in\C^n$ ($p\in P$) and observe that $\Re(\alpha_0)=\Re(\alpha_1)=0$ and $\Im(\alpha_1)=\mathfrak{p}(C)$. Choose a continuous family $\alpha_{p}^{t}\in\C^n$, $(p,t)\in P\times[0,1]$, such that $\lim_{t\to 1}\Re(\alpha_{p}^{t})=0$ ($p\in P$) and $\alpha_{p}^{t}=\alpha_p$ for $(p,t)\in Q\times[0,1]$. Fix a number $\varepsilon>0$.
By~\cite[Lemma~3.1]{forstneric2019parametric}, there exist a homotopy of paths 
\begin{equation*}
	\sigma_{p}^{t}\colon I_1\longrightarrow{\bf A}_*, \quad (p,t)\in P\times[0,1],
\end{equation*}
such that $\sigma_{p}^{t}=\sigma_p$ near the endpoints of $I_1$ for $(p,t)\in P\times[0,1]$, $\sigma_{p}^{t}=\sigma_p$ on the whole $I_1$ for $(p,t)\in Q\times[0,1]$, and also
\begin{equation} \label{eq:lemma-arcs-sigma}
	\left|\Re\int_{0}^{3}\sigma_{p}^{t}(s)ds \right| < \varepsilon, \quad (p,t)\in P\times[t_0,1],
\end{equation}
where $t_0\in(0,1)$. We may choose $\alpha_{p}^{t}\in\C^n$ such that the inequality~\eqref{eq:lemma-arcs-sigma} holds for some value of $t_0$ (recall that $\lim_{t\to1}\Re(\alpha_{p}^{t})=0$).

In the second step, we modify $\sigma_{p}$ on $I_2$ to insure that the integrals~\eqref{eq:lemma-arcs-sigma} equal $0$.
By assumption on $f_p$, $\sigma_p$ is nonflat on $I_2$ for all $p\in P$. Therefore, \cite[Lemma~3.6]{alarcon2018every} allows us to embed $\sigma_p|_{I_2}$ as the core $\tau_{p,0}=\sigma_p$ of a period dominating spray of paths
\begin{equation*}
	\tau_{p,\zeta}\colon I_2\longrightarrow{\bf A}_*, \quad (p,\zeta)\in P\times\B,
\end{equation*}
depending holomorphically on $\zeta\in\B\subset\C^N$ for some $N\in\N$, and such that $\tau_{p,\zeta}$ is independent of parameter $\zeta$ near the endpoints of $I_2$. Provided that $\varepsilon>0$ is sufficiently small~\eqref{eq:lemma-arcs-sigma}, the implicit function theorem gives us a continuous function 
\begin{equation} \label{eq:zeta-paths}
	\zeta\colon P\times[0,1] \longrightarrow \B, \quad \zeta=0 \text{ on } Q\times[0,1],
\end{equation}
such that, defining
\begin{equation*}
\tilde{\sigma}_{p}^{t}(s) :=
	\begin{cases}
		\tau_{p,\zeta(p,t)}(s) &s\in I_2 \\
		\sigma_{p}^{t}(s) &s\in I_1 \\
		\sigma_p(s) &s\in[0,3]\setminus (I_1\cup I_2),
	\end{cases}
\end{equation*}
for every $(p,t)\in P\times[0,1]$, it satisfies that
\begin{equation*}
	\Re\int_{0}^{3}\tilde{\sigma}_{p}^{t}(s)ds = 0, \quad (p,t)\in P\times[t_0,1].
\end{equation*}
The associated continuous family of nonflat continuous maps $\tilde{f}_{p}^{t}\colon S\to{\bf A}_*$ is given by~\eqref{eq:sigma_p}, with $\sigma_p$ replaced by $\tilde{\sigma}_{p}^{t}$, $(p,t)\in P\times[0,1]$.
By properties of $\tilde{\sigma}_{p}^{t}$, it satisfies the following conditions.
\begin{itemize}
\item[\rm (i.1)] $\tilde{f}_{p}^{t}=f_{p}^{t}=f_p$ on $K$ for $(p,t)\in P\times[0,1]$.
\item[\rm (ii.1)] $\tilde{f}_{p}^{t}=f_{p}^{t}$ on $S$ for $(p,t)\in Q\times[0,1]$ (recall in particular~\eqref{eq:zeta-paths}), and hence $\tilde{f}_{0}^{t}=f$, $\tilde{f}_{1}^{t}=f_1$ on $S$ for every $t\in[0,1]$.
\item[\rm (iii.1)] $\Re\int_{C}\tilde{f}_{p}^{t}\theta=0$ for $(p,t)\in P\times[t_0,1]$.
\end{itemize}
Defining 
\begin{equation*}
	\tilde{f}_p:=\tilde{f}_{p}^{1} \quad\text{and}\quad \tilde{u}_p(x):=u_{p}(x_0) + \Re\int_{x_0}^{x} \tilde{f}_p\theta, \quad x\in S,
\end{equation*}
they meet conclusions of the lemma. Indeed, (i) and (ii) follow from (i.1) and (ii.1), respectively, whereas (iii)--(v) are insured by (iii.1), $\tilde{f}_1=f_1$ and definition of $f_1$ on $S$ in the first part of the proof.
\end{proof}


\section{Proof of Theorem~\ref{thm:main}}
\label{sec:proof-main}

Choose a smooth strongly subharmonic Morse exhaustion function $\rho\colon M\to\R$ such that every level set $\{\rho=c\}$ contains at most one critical point of $\rho$. Take an increasing sequence $c_0<c_1<c_2<\dots$ of regular values of $\rho$ with $\lim_{i\to\infty}c_i=+\infty$ and such that every set $A_i=\{c_{i-1}<\rho<c_i\}$ contains at most one critical point of $\rho$ ($i\geq 1$).
Set $M_i=\{\rho\leq c_i\}$ for $i\in\Z_+$. By general position, we may assume that $bM_0=\{\rho=c_0\}$ contains no zeros of $u$.

Set $u_{t}^{0}:=u|_{M_0}$, $t\in[0,1]$, and pick a number $\varepsilon_0>0$.
We shall inductively construct homotopies $\{u_{t}^{i}\in\CMI_{*}(M_i,\R^n)\}_{i}$, $t\in[0,1]$, of nonflat conformal minimal immersions from a neighbourhood of $M_i$ into $\R^n$, and a decreasing sequence of positive numbers $\{\varepsilon_i>0\}_{i}$, such that the following properties hold for every $i\in\N$.
\begin{itemize}
\item[\rm (1$_i$)] $||u_{t}^{i}-u_{t}^{i-1}||_{M_{i-1}}<\varepsilon_{i-1}$ for every $t\in[0,1]$.

\item[\rm (2$_i$)] $u_{0}^{i}=u|_{M_i}$.

\item[\rm (3$_i$)] $||u_{1}^{i}(x)||_\infty>i-1$ for all $x\in{M_{i}\setminus\mathring M_{i-1}}$.

\item[\rm (4$_i$)] $||u_{1}^{i}(x)||_\infty>i$ for all $x\in bM_i$.

\item[\rm (5$_i$)] $\Flux(u_{1}^{i})=\mathfrak{p}$ on $H_1(M_i,\Z)$.

\item[\rm (6$_i$)] If $n\geq5$, then $u_{1}^{i}$ is injective.

\item[\rm (7$_i$)] $0<\varepsilon_i<\varepsilon_{i-1}/2$, and every harmonic map $v\colon M_i\to\R^n$ such that $||v-u_{t}^{i}||_{M_i}<2\varepsilon_i$ for some $t\in[0,1]$ is a nonflat immersion, and is in addition injective if $n\geq5$ and the inequality holds for $t=1$.
\end{itemize}
If such sequences exist, then $\{u_{t}^{i}\}_i$, $t\in[0,1]$, converge uniformly on compact sets in $M$ to a continuous family of limit maps 
\begin{equation*}
	u_t:=\lim_{i\to\infty}u_{t}^{i} \colon M\longrightarrow\R^n, \quad t\in[0,1],
\end{equation*}
that are nonflat conformal minimal immersions for all $t\in[0,1]$, and satisfy that $u_0=u$, $u_1$ is proper, $\Flux(u_1)=\mathfrak{p}$, and $u_1$ is an embedding if $n\geq5$.
Indeed, convergence is ensured by (1$_i$) and (7$_i$), $u_0=u$ follows from (2$_i$) and $\Flux(u_1)=\mathfrak{p}$ is guaranteed by (5$_i$). Properness of $u_1$ is a consequence of (3$_i$) and (4$_i$); for calculations, we refer to the proof of~\cite[Theorem~3.10.3, p.~183]{alarcon2021minimal} with $(\tilde{x}, x^i)$ replaced by $(u_1,u_1^i)$ in our setting, and we do not repeat it here. Finally, embeddedness of $u_1$ for $n\geq5$ follows from (6$_i$) and (7$_i$).

We now explain the induction.
The base step is provided by $u_{t}^0=u|_{M_0}\in\CMI_{*}(M_0,\R^n)$, $t\in[0,1]$, and $\varepsilon_0>0$ chosen at the beginning of the proof. Observe that (2$_0$), (4$_0$) and (5$_0$) hold by assumption, whereas the rest are void.
For the inductive step, let $i\in\N$ and suppose that we have already found homotopies $u_{t}^{j}\in\CMI_{*}(M_j,\R^n)$, $t\in[0,1]$, and numbers $\varepsilon_j>0$ satisfying the required conditions for every $j\in\{0,1,\dots,i-1\}$. We now construct the next pair $(u_{t}^{i},\varepsilon_i)$.

\textit{Case~1: The domain $A_i=\{c_{i-1}<\rho<c_i\}$ does not contain any critical point of $\rho$.}
In this situation, $M_{i-1}$ is a strong deformation retract of $M_i$, hence Proposition~\ref{prop:noncritical} furnishes us with a continuous family of nonflat conformal minimal immersions $u_{t}^{i}\in\CMI_{*}(M_i,\R^n)$, $t\in[0,1]$, fulfilling (1$_i$)--(6$_i$). We choose $\varepsilon_i>0$ small enough to satisfy (7$_i$) as well.

\textit{Case~2: The domain $A_i=\{c_{i-1}<\rho<c_i\}$ contains a critical point $x$ of $\rho$.}
By assumption, $x$ is a unique critical point of $\rho$ on $A_i$ and it is a Morse point with Morse index either $0$ or $1$. We consider the following subcases.

\textit{Subcase~2.1: The Morse index of $\rho$ at $x$ equals $0$.}
Take a small smoothly bounded disc $\Delta\subset A_i$ around $x$ in $A_i$. Extend $u_{t}^{i-1}\in\CMI_{*}(M_{i-1},\R^n)$, $t\in[0,1]$, to a disjoint union $S_i=M_{i-1}\cup\Delta$ by defining any homotopy of nonflat conformal minimal immersions $u_{t}^{i-1}\in\CMI_{*}(\Delta,\R^n)$, $t\in[0,1]$, such that $u_{0}^{i-1}=u|_{\Delta}$ and $||u_{1}^{i-1}(q)||_{\infty}>i-2$ for all $q\in\Delta$. The set $S_i=M_{i-1}\cup\Delta \subset M_i$ is admissible in $M_i$ and has the same topology as $M_i$, therefore $u_{t}^{i}\in\CMI_{*}(M_i,\R^n) $ is obtained by applying Case~1.

\textit{Subcase~2.2: The Morse index of $\rho$ at $x$ equals $1$.}
The change of topology is described by adjoining to $M_{i-1}$ a smoothly embedded arc $E\subset A_i\cup bM_{i-1}$ attached transversely to $bM_{i-1}$ with the endpoints $q_0,q_1\in bM_{i-1}$ and otherwise contained in $A_i$. The set $S_i=M_{i-1}\cup E$ is admissible in $M_i$ and $H_1(S_i,\Z)=H_1(M_i,\Z)$. We distinguish the following two situations.

\textit{Subcase~2.2.1.} The arc $E$ closes inside $M_{i-1}$ to a Jordan curve $C$ such that $E=\overline{C\setminus M_{i-1}}$, i.e., the endpoints $q_0,q_1\in bM_{i-1}$ of $E$ belong to the same connected component of $M_{i-1}$. Hence, the closed curve $C\subset M_i$ appears as a new element of the homology basis of $H_1(S_i,\Z)$.
In this setting, Lemma~\ref{lemma:critical} applied to $(K, E, u_p)=(M_{i-1}, E, u_{t}^{i-1})$ furnishes us with a continuous family $u_{t}^{i-1}\in\GCMI_{*}(S_i,\R^n)$, $t\in[0,1]$, of nonflat generalized conformal minimal immersions which equals the already given family $u_{t}^{i-1}\in\CMI_{*}(M_{i-1},\R^n)$, $t\in[0,1]$, on $M_{i-1}$, and satisfies that 
\begin{equation} \label{eq:proof thm S_i}
	\begin{cases}
		u_{0}^{i-1}=u|_{S_i}, \\
		||u_{1}^{i-1}(q)||_\infty>i-2 \quad \text{for all } q\in E, \\
		\Flux(u_{1}^{i-1})(C)=\mathfrak{p}(C).
	\end{cases}
\end{equation}
Arguing as in the proof of \cite[Theorem~5.3]{alarcon2016embedded}, we may approximate $u_{t}^{i-1}\in\GCMI_{*}(S_i,\R^n)$ arbitrarily closely in the $\Cscr^1(S_i)$ topology by a continuous family $\tilde{u}_{t}^{i-1}\in\CMI_{*}(V_i,\R^n)$, $t\in[0,1]$, of nonflat conformal minimal immersions defined in an open neighbourhood $V_i\subset M_i$ of $S_i$ such that
\begin{equation*} 
	\begin{cases}
		\tilde{u}_{0}^{i-1}=u|_{V_i}, \\
		||\tilde{u}_{1}^{i-1}(q)||_\infty>i-2 \quad \text{for all } q\in V_i\setminus \mathring M_{i-1}, \\
		\Flux(\tilde{u}_{1}^{i-1})=\mathfrak{p} \quad \text{on } H_1(S_i,\Z).
	\end{cases}
\end{equation*}
(Here, take into account~\eqref{eq:proof thm S_i}.)
Choose a smaller neighbourhood $W_i\Subset V_i$ of $S_i$ such that $\overline{W}_i$ is a strong deformation retract of $V_i$ and consider $\tilde{u}_{t}^{i-1}\in\CMI_{*}(W_i,\R^n)$, $t\in[0,1]$, as the restriction to $W_i$ of the one defined above. The proof now reduces to Case~1.

\textit{Subcase~2.2.2.} The endpoints $q_0,q_1\in bM_{i-1}$ of the arc $E$ belong to two different connected components of $M_{i-1}$, that is, no new element of the homology basis of $S_i$ appears and the number of connected components decreases by one.
This subcase is dealt with similarly to Subcase~2.2.1, applying Lemma~\ref{lemma:critical}. However, property (iii) in Lemma~\ref{lemma:critical} is replaced by $\Re\int_{E}\tilde{f}_p\theta=\tilde{u}_p(q_1)-\tilde{u}_p(q_0)$, whereas property (iv) concerning the imaginary part of the integral is irrelevant.

This closes the induction and proves the theorem. 


\section{Some remarks on minimal surfaces defined on compact \\ bordered Riemann surfaces}
\label{sec:bordered}

In recent years, one of the questions in minimal surface theory was to understand which domains in $\R^3$ admit complete properly immersed minimal surfaces, and what are the conformal properties of such minimal surfaces with respect to the geometry of the domain. As a result, Alarc\'on, Drinovec Drnov\v{s}ek, Forstneri\v{c}, and L\'opez in~\cite{alarcon2019minimal} proved general existence and approximation results for complete proper conformal minimal immersions from a bordered Riemann surface into a minimally convex domain in $\R^3$.
In this section, we present how these results may be upgraded using techniques developed in the present paper.

Recall that a compact connected oriented surface $M$, endowed with a complex structure, is called a \emph{compact bordered Riemann surface} if its boundary $bM\neq\varnothing$ consists of finitely many smooth Jordan curves. The interior $\mathring M=M\setminus bM$ is said to be an (open) \emph{bordered Riemann surface}.

\begin{definition}
A domain $D\subset\R^n$ ($n\geq 3$) is \emph{minimally convex} if it admits a smooth exhaustion function $\rho\colon D\to\R$ that is \emph{minimal strongly plurisubharmonic} (or \emph{strongly $2$-plurisubharmonic}), that is, if for every $x\in D$, the sum of the two smallest eigenvalues of the Hessian $\Hess_{\rho}(x)$ is positive.
\end{definition}

It is known that every convex domain is minimally convex, however, there are examples of minimally convex domains without convex boundary points (see, e.g.~\cite[Example~1.4]{alarcon2019minimal}.)
Minimal (strongly) plurisubharmonic functions belong to a larger family of \emph{(strongly) $p$-plurisubharmonic functions}\footnote{Let $D\subset\R^n$ be a domain. A function $u\colon D\to\R$ of class $\Cscr^2(D)$ is \emph{(strongly) $p$-plurisubharmonic} for some $p\in\{1,\dots,n\}$ if the sum of the $p$ smallest eigenvalues of $\Hess_{u}(x)$ is nonnegative, or positive, respectively, for every $x\in D$.} that were studied by Harvey and Lawson~\cite{harvey2013pconvexity}.
However, we shall focus on the special case for $p=2$, in particular, on the main result~\cite[Theorem~1.1]{alarcon2019minimal} in the aforementioned paper, which asserts that every conformal minimal immersion of a compact bordered Riemann surface $M$ into a minimally convex domain $D\subset\R^3$ can be approximated, uniformly on compacts in $\mathring M=M\setminus bM$, by proper complete conformal minimal immersions $\mathring M\to D$. By combining this with Lemma~\ref{lemma:hom-2fixed}, we obtain the following corollary.

\begin{corollary} \label{cor:bordered1}
Assume that $D$ is a minimally convex domain in $\R^3$, and let $M$ be a compact bordered Riemann surface with nonempty boundary $bM$. Then, every nonflat conformal minimal immersion $u\colon M\to D$ is homotopic through nonflat conformal minimal immersions $\mathring M\to D$ to a proper complete nonflat conformal minimal immersion $\tilde{u}\colon \mathring M\to D$ with $\Flux(u)=\Flux(\tilde{u})$.
\end{corollary}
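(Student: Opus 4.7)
My plan combines the approximation theorem \cite[Theorem~1.1]{alarcon2019minimal} with Lemma~\ref{lemma:hom-2fixed} and an iterative extension modelled on the proof of Theorem~\ref{thm:main}, adapted to keep images inside $D$. First, choose a smoothly bounded compact subdomain $K_0 \subset \mathring M$ that is a strong deformation retract of $M$ (hence of $\mathring M$). By \cite[Theorem~1.1]{alarcon2019minimal}, modified in the standard way via a period-dominating spray to prescribe the flux, there exists a proper complete nonflat conformal minimal immersion $\tilde u \colon \mathring M \to D$ with $\Flux(\tilde u) = \Flux(u)$ and $\|\tilde u - u\|_{K_0}$ as small as desired. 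After a small translation matching $\tilde u(x_0) = u(x_0)$ at a fixed $x_0 \in \mathring{K}_0$ (see Remark~\ref{rem:lemma-hom2fixed}), Lemma~\ref{lemma:hom-2fixed} applied with $\mu < \dist(u(K_0),\, \R^3 \setminus D)$ furnishes a homotopy $v^{(0)}_t \colon K_0 \to D$ of nonflat conformal minimal immersions connecting $u|_{K_0}$ to $\tilde u|_{K_0}$ with $\|v^{(0)}_t - u\|_{K_0} < \mu$ for all $t \in [0,1]$.

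Second, to extend this homotopy from $K_0$ to all of $\mathring M$, exhaust $\mathring M$ by smoothly bounded compact domains $K_0 \Subset K_1 \Subset K_2 \Subset \cdots$, each a strong deformation retract of the next. Inductively, at stage $j \geq 1$, invoke a suitable variant of the Claim from the proof of Proposition~\ref{prop:noncritical} to produce a continuous family $v^{(j)}_t \in \CMI_{*}(K_j, \R^3)$ with $v^{(j)}_0 = u$ and $v^{(j)}_1 = \tilde u$ on $K_j$ (both are already defined on all of $\mathring M$) and with $v^{(j)}_t$ uniformly close to $v^{(j-1)}_t$ on $K_{j-1}$ for every $t$. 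Choosing the approximation errors at each stage to be summable, the limit $v_t := \lim_{j \to \infty} v^{(j)}_t$ exists on $\mathring M$ as a continuous family of nonflat conformal minimal immersions with $v_0 = u|_{\mathring M}$, $v_1 = \tilde u$, and $\Flux(v_t) = \Flux(u)$ for every $t$.

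The main obstacle will be ensuring $v^{(j)}_t(K_j) \subset D$ at each stage, since the Claim only guarantees approximation of $v^{(j-1)}_t$ on $K_{j-1}$. At the endpoints $t \in \{0,1\}$ this is automatic, but for intermediate $t$ the map $v^{(j)}_t$ is unconstrained on the collar $K_j \setminus K_{j-1}$ and could a priori escape $D$. To resolve this, the extension step must blend the Oka-principle period correction underlying the Claim with the technique developed in \cite{alarcon2019minimal} for pushing conformal minimal surfaces into a minimally convex domain (available here since $D$ is minimally convex), applied simultaneously to the whole parametric family $\{v^{(j-1)}_t\}_{t \in [0,1]}$. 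This yields $v^{(j)}_t \colon K_j \to D$ that approximates $v^{(j-1)}_t$ on $K_{j-1}$, stays inside $D$ on all of $K_j$, and has endpoints fixed at $u$ and $\tilde u$. A careful telescoping of the resulting error budgets over the exhaustion then delivers the desired homotopy $v_t \colon \mathring M \to D$.
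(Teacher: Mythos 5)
Your proposal follows the two-step sketch that the paper itself offers — combine \cite[Theorem~1.1]{alarcon2019minimal} with Lemma~\ref{lemma:hom-2fixed} — and you correctly put your finger on the step the paper glosses over: once Lemma~\ref{lemma:hom-2fixed} yields a homotopy $v^{(0)}_t\colon K_0\to D$ from $u|_{K_0}$ to $\tilde u|_{K_0}$, the parametric Mergelyan extension of Claim~\ref{claim:2fixed} controls the extended family only on the smaller domain, so on each collar $K_j\setminus K_{j-1}$ the intermediate maps $v^{(j)}_t$ for $t\in(0,1)$ are unconstrained and may exit $D$. However, what you offer as a resolution is an assertion, not an argument. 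Saying the extension ``must blend'' the Oka-principle period correction with the Riemann--Hilbert technique of \cite{alarcon2019minimal} ``applied simultaneously to the whole parametric family'' and that this ``yields'' a family $v^{(j)}_t\colon K_j\to D$ with fixed endpoints is precisely the lemma that needs to be proved. A parametric minimally-convex modification in which the $t=0$ and $t=1$ fibres are kept \emph{exactly} equal to $u$ and $\tilde u$ on all of $K_j$, while the interior of the parameter interval is pushed back inside $D$ and still approximates the previous stage on $K_{j-1}$, is not a routine reformulation of \cite{alarcon2019minimal}: their Riemann--Hilbert deformations move the surface near the outer boundary rather than fix it, and one cannot freeze the endpoints by a cutoff in $t$ without destroying conformality of the intermediate maps. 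Until such a parametric statement is actually established, the induction does not close.

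A smaller but genuine slip: the translation trick of Remark~\ref{rem:lemma-hom2fixed} is safe for maps into $\R^n$, but here $\tilde u$ is a \emph{proper} map into the open set $D$. Translating $\tilde u$ by the fixed nonzero vector $u(x_0)-\tilde u(x_0)$ can push points of $\tilde u(\mathring M)$ near $bD$ out of $D$, so the translated map need not take values in $D$ at all. You should instead impose $\tilde u(x_0)=u(x_0)$ directly when invoking \cite[Theorem~1.1]{alarcon2019minimal}, by interpolating at the single point $x_0$, which is compatible with the same period-dominating spray you already use to prescribe the flux.
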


Likewise, \cite[Theorems~1.7 and~1.9]{alarcon2019minimal} may be upgraded in the same manner, and hence, providing homotopies instead of only uniform approximation on compact sets. These results hold true for any $n\geq 3$. 


\section{Directed holomorphic curves}
\label{sec:directed}

The aim of this section is to generalize Theorem~\ref{thm:main}. We prove Theorem~\ref{thm:directed}, which affirms that every nondegenerate holomorphic immersion of an open Riemann surface into $\C^n$ directed by a certain complex conical subvariety $A\subset\C^n$, meaning that the derivative of the map belongs to $A\setminus\{0\}$, can be deformed by a homotopy of maps of the same type to a nondegenerate proper directed embedding.

\begin{definition} \label{def:directed-imm}\cite[Definition~2.1]{alarcon2014null}
Let $M$ be an open Riemann surface and let $A\subset\C^n$ be a closed conical complex subvariety. We call a holomorphic immersion $F=(F_1,\dots,F_n)\colon M\to\C^n$ an \emph{$A$-immersion}, or \emph{directed by $A$}, if its complex derivative $F'$ with respect to any local holomorphic coordinate on $M$ assumes its values in $A_*=A\setminus\{0\}$.
An injective $A$-immersion is called an \emph{$A$-embedding}.
\end{definition}

Recall that a subvariety $A\subset\C^n$ is \emph{conical} if $\lambda A=A$ for every $\lambda\in\C\setminus\{0\}$, meaning that it is a union of complex lines through the origin.
In what follows, we will assume that a closed conical complex subvariety $A$ is irreducible, smooth (non-singular) away from $0\in\C^n$ and not contained in any hyperplane of $\C^n$. These conditions imply that $n\geq 3$. We denote the punctured subvariety by $A_*:=A\setminus\{0\}$, assuming that it is a smooth and connected Oka manifold; the latter being the crucial property.
Fix a nowhere vanishing holomorphic $1$-form $\theta$ on $M$~\cite{gunning1967immersion} and write $dF=f\theta$, where $f=(f_1,\dots,f_n)\colon M\to\C^n$ is a holomorphic map. Then, $F$ is directed by $A$ if and only if $f=dF/\theta$ has the range in $A_*$. 
Conversely, given a holomorphic map $f=(f_1,\dots,f_n)\colon M\to A_*$, it corresponds to a holomorphic $A$-immersion $F\colon M\to\C^n$ if and only if the vectorial $1$-form $f\theta=(f_1\theta,\dots,f_n\theta)$ is holomorphic and exact, i.e., $\int_{C}f\theta=0$ for every closed curve $C\subset M$. Furthermore, taking an arbitrary initial point $x_0\in M$, the corresponding $A$-immersion $F$ is recovered by
\begin{equation*}
F(x)=F(x_0) + \int_{x_0}^{x}f\theta, \quad x\in M.
\end{equation*}
We point out that the choice of $1$-form $\theta$ is not important (as long as it is fixed) since the subvariety $A\subset\C^n$ is conical.
A special case of directed $A$-immersions occurs when $A$ is the null quadric ${\bf A}$~\eqref{eq:null quadric}; then a holomorphic $A$-immersion is precisely the null curve.

\begin{definition} \label{def:nondegenerate}
Let $A$ be as in Definition~\ref{def:directed-imm}, $M$ be either an open Riemann surface or a compact bordered Riemann surface, and let $\theta$ be a nowhere vanishing holomorphic $1$-form on $M$.
A holomorphic $A$-immersion $F\colon M\to\C^n$ is \emph{nondegenerate} if the map $f=dF/\theta\colon M\to A_*$ is \emph{nondegenerate}, that is, if the tangent spaces $T_{f(x)}A\subset T_{f(x)}\C^n \cong \C^n$ over all points $x\in M$ span $\C^n$.
\end{definition}

We denote by $\Iscr_{A}(M,\C^n)$ the space of all holomorphic $A$-immersions $M\to\C^n$ endowed with the compact-open topology, and by
\begin{equation*}
	\Iscr_{A,*}(M,\C^n) \subset \Iscr_{A}(M,\C^n)
\end{equation*}
the subspace of $\Iscr_{A}(M,\C^n)$ consisting of all nondegenerate holomorphic $A$-immersions.
What we know is the parametric h-principle with approximation for holomorphic immersions of open Riemann surfaces directed by Oka manifolds~\cite[Theorem~5.3]{forstneric2019parametric} and the weak homotopy equivalence principle for directed immersions~\cite[Theorem~5.6]{forstneric2019parametric}. In particular, the inclusion
\begin{equation*}
\Iscr_{A,*}(M,\C^n) \longhookrightarrow \Cscr(M,A_*)
\end{equation*}
is a weak homotopy equivalence. However, an analogue of~\eqref{eq:parametric h-inclusion complete} for the space of nondegenerate complete directed immersions remains unanswered. The reason lies in the proof of~\cite[Theorem~1.1]{alarcon2025strong}, so far the only known result concerning the space $\CMI_{*}^{c}(M,\R^n)$, which strongly uses the geometry of the null quadric ${\bf A}_*\subset\C^n$. More precisely, the standard construction of complete conformal minimal immersions is based on the L\'opez-Ros deformation for minimal surfaces~\cite{lopez1991embedded} and Jorge-Xavier-type labyrinth~\cite{jorge1980complete}, which is not generalized to an arbitrary Oka cone in $\C^n$.

In the present paper, when studying the space of proper nondegenerate directed immersions $M\to\C^n$, we are able to overcome the difficulty described above to a certain extent, since we rely on the Oka theory instead of the geometry of the null quadric.
Here is our second main result.

\begin{theorem} \label{thm:directed}
Let $A\subset\C^n$ ($n\geq3$) be a closed irreducible conical complex subvariety which is not contained in any hyperplane, is smooth away from $0$, and such that $A_*=A\setminus\{0\}$ is an Oka manifold. Assume that for $k\in\{1,2\}$, the hyperplane section $A\cap\{z_k=1\}$ is an Oka manifold and the coordinate projection $\pi_k\colon A\to\C^n$ onto the $z_k$-axis admits a local holomorphic section $h_k$ near $z_k=0$ with $h_k(0)\neq0$.
Let $F\colon M\to\C^n$ be a nondegenerate holomorphic $A$-immersion from an open Riemann surface $M$ into $\C^n$. Then there exists a homotopy $F_t\colon M\to\C^n$, $t\in[0,1]$, of nondegenerate holomorphic $A$-immersions such that $F_0=F$ and $F_1$ is a proper nondegenerate holomorphic $A$-embedding.

Furthermore, we can choose the homotopy such that the first two components of $F_1$ determine a proper map $M\to\C^2$.
\end{theorem}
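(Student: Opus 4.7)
\medskip

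\noindent\textbf{Proof proposal.} The plan is to follow the inductive scheme of Section~\ref{sec:proof-main} essentially line by line, with every appearance of a nonflat conformal minimal immersion $u\colon M\to\R^n$ replaced by a nondegenerate $A$-immersion $F\colon M\to\C^n$, the derivative $2\partial u/\theta$ replaced by $f=dF/\theta\colon M\to A_*$, and the real period map $\Re\Pscr$ replaced by the full complex period map $\Pscr(f)=(\int_{C_i}f\theta)_i$. Choose a Morse exhaustion $\rho\colon M\to\R$ and an increasing sequence of regular values $c_0<c_1<\cdots$ with at most one critical point of $\rho$ on each $A_i=\{c_{i-1}<\rho<c_i\}$, and set $M_i=\{\rho\le c_i\}$. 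Inductively construct homotopies $F_t^i\in \Iscr_{A,*}(M_i,\C^n)$ and $\varepsilon_i\searrow 0$ satisfying the direct analogues of $(1_i)$--$(7_i)$, with $(3_i)$--$(4_i)$ demanding $\max(|F_{1,1}^i(x)|,|F_{1,2}^i(x)|)>i-1$ on $M_i\setminus\mathring M_{i-1}$ and $>i$ on $bM_i$, and $(6_i)$ demanding that $F_1^i$ be injective. The limit $F_t=\lim_i F_t^i$ is then the desired homotopy, with $F_1$ proper (since $(3_i)$--$(4_i)$ already force the first two components to leave every compact subset of $\C^2$, a fortiori $F_1$ leaves every compact of $\C^n$) and, if the injectivity is preserved under the $\varepsilon_i$-closeness of $(7_i)$, also an embedding.

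The three technical ingredients needed are directed analogues of Lemma~\ref{lemma:hom-2fixed}, Proposition~\ref{prop:noncritical}, and Lemma~\ref{lemma:critical}. The homotopy-by-closeness lemma carries over verbatim: given $F\in\Iscr_{A,*}(K,\C^n)$ on a smoothly bounded compact domain $K\subset M$, nondegeneracy of $f=dF/\theta$ means the tangent spaces to $A$ along $f(K)$ span $\C^n$, which (exactly as in the proof of Lemma~\ref{lemma:hom-2fixed}, using \cite[Lemma~3.2.1]{alarcon2021minimal} specialised to $A_*$ in place of ${\bf A}_*$) lets us embed $f$ as the core of a period-dominating spray $f^\zeta\colon K\to A_*$; Siu's Stein-neighbourhood theorem links a sufficiently close $\tilde f$ to $f$ through a homotopy inside $A_*$, and the implicit function theorem then kills the full (complex) periods to produce the desired homotopy of $A$-immersions, matching values at a chosen base point. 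For the noncritical case, one imitates the proof of Proposition~\ref{prop:noncritical}: extend from $K$ to $L$ using the parametric Oka property of $A_*$ (here the Oka hypothesis enters essentially) combined with the period-dominating spray argument of Claim~\ref{claim:2fixed}, and in the step that produces the pushing-out conclusion (iii), invoke the directed analogue of \cite[Lemma~3.11.1, Theorem~3.4.1]{alarcon2021minimal}. The critical case is Lemma~\ref{lemma:critical} with the full complex period imposed to vanish, and with the prescribed-value condition along $E$ rephrased so as to force $\max(|F_{1,1}|,|F_{1,2}|)$ large on the added handle.

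The main obstacle is precisely the directed pushing-out step: producing, from $F\in\Iscr_{A,*}(K,\C^n)$, a nearby $\tilde F\in\Iscr_{A,*}(L,\C^n)$ whose first two components exceed a prescribed bound on $L\setminus\mathring K$ and on $bL$, while preserving nondegeneracy and full periods. This is where the extra hypotheses on $A$ are consumed. The fact that $A\cap\{z_k=1\}$ is an Oka manifold for $k\in\{1,2\}$ provides the Mergelyan/Oka machinery required to approximate sections of $\pi_k$ away from the zero locus of the $k$th coordinate, exactly as in the labyrinth-and-Riemann--Hilbert arguments of \cite[Chapter~3]{alarcon2021minimal}; the local holomorphic section $h_k$ of $\pi_k$ near $z_k=0$ with $h_k(0)\neq 0$ handles the delicate points where $f_k$ vanishes, by locally lifting into $A_*$ and thereby keeping the modified derivative inside $A_*$. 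Combining these, one adapts the proof of the proper-minimal-surface approximation results of~\cite{alarcon2014null,alarcon2021minimal} to yield the required pushing-out in the first two coordinates, with the period correction carried out by the spray as in Lemma~\ref{lemma:hom-2fixed}. Injectivity of $F_1^i$ follows by a standard general-position argument since $n\ge 3$ and, thanks to the Oka hypothesis on $A_*$, the space of directed immersions is ``large enough'' to make the self-intersection locus avoidable, cf.\ the analogue of \cite[Theorem~3.4.1]{alarcon2021minimal}. The two-paragraph template of Section~\ref{sec:proof-main} (Cases~1, 2.1, 2.2.1, 2.2.2) then applies unchanged, producing the required limit homotopy.
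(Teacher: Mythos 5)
Your proposal is correct and follows essentially the same route as the paper's proof. The paper likewise runs the inductive scheme of Section~\ref{sec:proof-main} verbatim, replacing real periods with full complex periods, invoking the directed pushing-out result \cite[Lemma~8.2]{alarcon2014null} (which is precisely where the hypotheses on the hyperplane sections $A\cap\{z_k=1\}$ and the local sections $h_k$ are consumed, as you identify), replacing \cite[Theorem~3.4.1]{alarcon2021minimal} with \cite[Theorem~2.5]{alarcon2014null} for injectivity, and noting that Lemma~\ref{lemma:hom-2fixed}, Claim~\ref{claim:2fixed}, Lemma~\ref{lemma:critical}, \cite[Lemma~3.3]{alarcon2019interpolation}, and \cite[Lemma~3.1]{forstneric2019parametric} all carry over to the Oka cone $A_*$ because $A_*$ admits period-dominating sprays (nondegeneracy) and the convex hull of $A$ is $\C^n$. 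Your version of $(3_i)$--$(4_i)$ using $\max(|F_{1,1}^i|,|F_{1,2}^i|)$ rather than the $\infty$-norm of all components correctly matches both the output of \cite[Lemma~8.2]{alarcon2014null} and the final assertion about properness of the $\C^2$-projection. Minor imprecision: the Riemann--Hilbert method you mention is not actually part of the properness mechanism here (it belongs to completeness constructions); the pushing-out is achieved directly by Runge/Mergelyan approximation of sections of $\pi_k$ over a labyrinth, which your citation of the local sections $h_k$ otherwise captures accurately.
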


\begin{proof}
The proof uses the same tools as the proof of Theorem~\ref{thm:main}, therefore, we explain the main steps and point out the modifications, leaving out the details. (See Sections~\ref{sec:(non)critical},~\ref{sec:proof-main}.)

Exhaust $M$ by a sequence of connected smoothly bounded Runge compact domains
\begin{equation*}
	M_0\Subset M_1\Subset M_2 \Subset \cdots \Subset \bigcup_{j=0}^{\infty}M_j=M
\end{equation*}
that are sublevel sets of a smooth strongly subharmonic Morse exhaustion function $\rho\colon M\to\R$, meeting the properties stated in the proof of Theorem~\ref{thm:main}. Assume that the first two components of $F$ have no common zeros on $bM_0$.

Set $F_{t}^{0}:=F|_{M_0}$, $t\in[0,1]$, and pick a number $\varepsilon_0>0$. We inductively construct homotopies $\{F_{t}^{i}\in\Iscr_{A,*}(M_i,\C^n)\}_{i}$, $t\in[0,1]$, of nondegenerate $A$-immersions from a neighbourhood of $M_i$ into $\C^n$, and a decreasing sequence of positive numbers $\{\varepsilon_i>0\}_{i}$, such that the corresponding properties (analogous to (1$_i$)--(7$_i$), but omitting (5$_i$) and taking (6$_i$) for every $n\geq3$, in the cited proof) hold.
Consequently, $\{F_{t}^{i}\}_i$, $t\in[0,1]$, converge uniformly on compact sets in $M$ to a continuous family of nondegenerate $A$-immersions
\begin{equation*}
	F_t:=\lim_{i\to\infty}F_{t}^{i} \colon M\longrightarrow\C^n, \quad t\in[0,1],
\end{equation*}
that satisfy $F_0=F$ and $F_1$ is a proper nondegenerate $A$-embedding.

The noncritical case of the induction is solved by an analogous result to Proposition~\ref{prop:noncritical}, but for directed immersions. In its proof, we firstly replace \cite[Lemma~3.11.1]{alarcon2021minimal} by \cite[Lemma~8.2]{alarcon2014null}. Moreover, observe that Lemma~\ref{lemma:hom-2fixed} and Claim~\ref{claim:2fixed} hold in our setting (however, we consider the whole complex integrals instead of only the real parts in their proofs). The main reasons that enable these generalizations are that $A_*$ is an Oka manifold and that the convex hull of $A$ equals $\C^n$. (We refer to \cite[Lemma~3.1]{alarcon2014null} for the latter, and to \cite[Lemma~5.1]{alarcon2014null} for the existence of period dominating sprays of nondegenerate holomorphic maps to an Oka manifold $A_*$.)
Injectivity of $F_{1}^{i}$ ($i\in\N$) is obtained by applying~\cite[Theorem~2.5]{alarcon2014null} in place of \cite[Theorem~3.4.1]{alarcon2021minimal}.

In the critical case, we extend the continuous family of holomorphic maps 
\begin{equation*}
	f_{t}^{i-1}=dF_{t}^{i-1}/\theta\colon M_{i-1}\longrightarrow A_*, \quad t\in[0,1],
\end{equation*}
from $M_{i-1}$ either to a disjoint smoothly bounded disc $\Delta\subset\mathring M_i$ (see Subcase~2.1 in the proof of Theorem~\ref{thm:main}), or to a smoothly embedded arc $E \subset \mathring M_{i}\setminus\mathring M_{i-1}$ attached transversely to $bM_{i-1}$, by an analogue of Lemma~\ref{lemma:critical} (see Subcase~2.2.1 if $E$ closes inside $M_{i-1}$ to a loop $C$, and Subcase~2.2.2 if $E$ connects two connected components of $M_{i-1}$). As in the noncritical case, we consider the whole complex integrals instead of only the real parts. Note also that \cite[Lemma~3.3]{alarcon2019interpolation} and \cite[Lemma~3.1]{forstneric2019parametric} hold for this subvariety $A$ (the latter is justified since the convex hull of $A$ equals $\C^n$). The remaining parts of the proof resemble the critical case in the proof of Theorem~\ref{thm:main} with the null quadric ${\bf A}_*\subset\C^n$ replaced by a general Oka manifold $A_*$, using period dominating sprays of nondegenerate holomorphic maps to $A_*$ and Mergelyan approximation theorem for maps to Oka manifolds~\cite[Theorem~5.4.4]{forstneric2017stein}. See also~\cite[Claim, p.~26, and the proof of Theorem~5.3]{forstneric2019parametric}.
\end{proof}

\begin{remark}
Conditions on the cone $A\subset\C^n$, in particular those concerning the hyperplane sections $A\cap\{z_k=1\}$ for $k\in\{1,2\}$, are purely technical. They were firstly introduced in~\cite{alarcon2014null} and are necessary in order to apply \cite[Lemma~8.2]{alarcon2014null} (see the proof of Theorem~\ref{thm:directed}), which enables a directed immersion to be made proper. 
\end{remark}

In analogy to Corollary~\ref{cor:surjection CMI} and Problem~\ref{prob:whe CMI}, we state the following consequence and an open problem.

\begin{corollary} \label{cor:surjection NC}
The inclusion
\begin{equation} \label{eq:surjection NC-proper}
	\Iscr_{A,*}^{p}(M,\C^n) \longhookrightarrow \Iscr_{A,*}(M,\C^n)
\end{equation} 
of the space of proper nondegenerate holomorphic $A$-immersions $M\to\C^n$ into the space of nondegenerate holomorphic $A$-immersions $M\to\C^n$ induces a surjection of path components.
\end{corollary}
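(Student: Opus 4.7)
The plan is to deduce this directly from Theorem~\ref{thm:directed}, exactly as Corollary~\ref{cor:surjection CMI} follows from Theorem~\ref{thm:main}. To show that the inclusion induces a surjection on path components, it suffices to fix an arbitrary $F \in \Iscr_{A,*}(M,\C^n)$ and exhibit a map $\widetilde{F} \in \Iscr_{A,*}^{p}(M,\C^n)$ that lies in the same path component of $\Iscr_{A,*}(M,\C^n)$ as $F$.

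Apply Theorem~\ref{thm:directed} to $F$ to obtain a homotopy $F_t \colon M \to \C^n$, $t \in [0,1]$, of nondegenerate holomorphic $A$-immersions with $F_0 = F$ and $F_1$ a proper nondegenerate $A$-embedding. In particular, $F_1 \in \Iscr_{A,*}^{p}(M,\C^n)$. The map $t \mapsto F_t$ is continuous into $\Iscr_{A,*}(M,\C^n)$ with the compact-open topology (this is precisely the content of the assertion that $\{F_t\}_{t\in[0,1]}$ is a continuous family of maps $M \to \C^n$), hence it defines a path in $\Iscr_{A,*}(M,\C^n)$ from $F$ to $F_1$. Therefore the path component of $F$ contains the image of the element $F_1 \in \Iscr_{A,*}^{p}(M,\C^n)$ under the inclusion~\eqref{eq:surjection NC-proper}.

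Since $F \in \Iscr_{A,*}(M,\C^n)$ was arbitrary, the induced map $\pi_0(\Iscr_{A,*}^{p}(M,\C^n)) \to \pi_0(\Iscr_{A,*}(M,\C^n))$ is surjective. There is essentially no obstacle here beyond invoking Theorem~\ref{thm:directed}; the only point to verify, which is built into the statement of the theorem, is that the family $\{F_t\}$ is genuinely continuous in the compact-open topology on $\Iscr_{A,*}(M,\C^n)$, which follows from the inductive construction since each $F_t$ is obtained as a uniform limit on compacts of continuous families $F_t^i$.
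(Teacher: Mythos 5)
Your proposal is correct and takes exactly the route the paper intends: the corollary is stated as an immediate consequence of Theorem~\ref{thm:directed}, and your argument — applying the theorem to an arbitrary $F$ to obtain a path in $\Iscr_{A,*}(M,\C^n)$ ending at a proper $A$-embedding, hence surjectivity on $\pi_0$ — is precisely that deduction, spelled out. The paper gives no separate proof for this corollary, so there is nothing to compare beyond noting that your reasoning fills in the same implicit step (continuity of $t\mapsto F_t$ in the compact-open topology) that the theorem's statement already guarantees.
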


A direct consequence states that the inclusion
\begin{equation} \label{eq:surjection NC-complete}
	\Iscr_{A,*}^{c}(M,\C^n) \longhookrightarrow \Iscr_{A,*}(M,\C^n)
\end{equation} 
of the space of complete nondegenerate holomorphic $A$-immersions $M\to\C^n$ into the space of nondegenerate holomorphic $A$-immersions $M\to\C^n$ induces a surjection of path components. We point out that Corollary~\ref{cor:surjection NC} provides the first topological information on the space $\Iscr_{A,*}^{p}(M,\C^n)$, and even $\Iscr_{A,*}^{c}(M,\C^n)$.

\begin{problem} \label{prob:whe directed}
Does the inclusion~\eqref{eq:surjection NC-proper} induce a bijection of path components? Is it a weak homotopy equivalence?
What about the inclusion~\eqref{eq:surjection NC-complete}?
\end{problem}

A positive answer to Problem~\ref{prob:whe directed} would, combined with \cite[Theorem~5.6]{forstneric2019parametric}, tell us that the inclusion $\Iscr_{A,*}^{p}(M,\C^n) \longhookrightarrow \Oscr(M,A_*)$, or $\Iscr_{A,*}^{c}(M,\C^n) \longhookrightarrow \Oscr(M,A_*)$, respectively, is a weak homotopy equivalence. Equivalently, that would mean that the map $\Iscr_{A,*}^{p}(M,\C^n) \longhookrightarrow \Cscr(M,A_*)$, or $\Iscr_{A,*}^{c}(M,\C^n) \longhookrightarrow \Cscr(M,A_*)$, respectively, is a weak homotopy equivalence.

\begin{remark}
A simplification of our proof allows to extend the results to the framework of complex curves in $\C^n$ with $n\ge 2$. In particular, if $M$ is an open Riemann surface then every nondegenerate holomorphic immersion $M\to\C^n$ is homotopic through holomorphic immersions $M\to\C^n$ to a proper one, which can be chosen an embedding if $n\ge 3$.
\end{remark}


\subsection*{Acknowledgements}
This research is partially supported by the State Research Agency (AEI) via the grant no.\ PID2023-150727NB-I00, funded by MICIU/AEI/10.13039/501100011033 and ERDF/EU, Spain.





\medskip
\noindent Tja\v{s}a Vrhovnik

\smallskip
\noindent Departamento de Geometr\'{\i}a y Topolog\'{\i}a e Instituto de Matem\'aticas (IMAG), Universidad de Granada, Campus de Fuentenueva s/n, E--18071 Granada, Spain.

\smallskip
\noindent e-mail: {\tt vrhovnik@ugr.es}

\end{document}